\documentclass[10pt,a4paper]{article}
\usepackage{amsmath}
\usepackage{amssymb}
\usepackage{amsthm}
\usepackage{mathrsfs}
\usepackage{enumitem}
\newtheorem{theorem}{Theorem}[section]
\newtheorem{definition}[theorem]{Definition}
\newtheorem{lemma}[theorem]{Lemma}
\newtheorem{example}[theorem]{Example}
\newtheorem{remark}[theorem]{Remark}
\newcommand{\sind}{k}
\newcommand{\epi}{\mathop{}\!\mathrm{epi\hspace{0.05 cm}}}
\newcommand{\gr}{\mathop{}\!\mathrm{Gr\hspace{0.05 cm}}} 
\newcommand{\dif}{\mathop{}\!\mathrm{d}} 
\newcommand{\co}{\mathop{}\!\mathrm{co\hspace{0.05 cm}}}
\newcommand{\wl}{\mathop{}\!\mathrm{w}\hspace{0.03 cm}\text{-}\hspace{-0.1 cm}}
\newcommand{\sqa}{\varepsilon}
\newcommand{\sqb}{\eta}
\newcommand{\sqc}{\theta}
\newcommand{\sqd}{\vartheta}
\newcommand{\sqe}{\nu}

\begin{document}

\title{A New Constraint Qualification for Mixed Constrained Optimal Control}

\author{
Rodrigo B. Moreira\footnote{rbmoreira@uesc.br} \\
Department of Exact Sciences, \\
State University of Santa Cruz (UESC), \\
Ilhéus, Bahia, Brazil
\and
Valeriano A. de Oliveira\footnote{valeriano.oliveira@unesp.br} \\
Department of Mathematics, \\
S\~ao Paulo State University (UNESP), \\
S\~ao Jos\'e do Rio Preto, S\~ao Paulo, Brazil
}

\date{}

\maketitle

\begin{abstract}
In recent developments, a novel set of necessary optimality conditions for mixed constrained optimal control problems, termed the asymptotic weak maximum principle, has been formulated. These novel conditions deviate from the classical ones by virtue of their sequential nature and the fact that they are satisfied regardless of the regularity conditions imposed on the mixed constraints. Furthermore, due to their asymptotic behaviour, these conditions serve as a precise tool for use as stopping criteria in numerical methods of solution. However, it should be noted that, in certain instances, these conditions may not be sufficiently robust to fully characterize optimal solutions, as they can be satisfied by processes that are not extremals. The present study proposes a novel constraint qualification, meticulously developed to address these asymptotic optimality conditions. It is demonstrated that the asymptotic weak maximum principle implies the classical weak maximum principle when the newly proposed constraint qualification is verified. It is further demonstrated that, in the smooth setting, this constraint qualification is the weakest one that possesses such a property. Additionally, this study present sufficient criteria for the validity of the newly proposed constraint qualification.

\textbf{Keywords:} Asymptotic weak maximum principle, constraint qualifications, mixed constraints.
\end{abstract}


\section{Introduction}

In the context of nonlinear optimization problems, Andreani et al. \cite{Andreani2011} established the approximate KKT (AKKT) and approximate projected gradient (AGP) sequential optimality conditions. They demonstrated that AKKT and AGP are necessarily satisfied at every local optimal solution and that popular methods, such as the augmented Lagrangian method, generate sequences whose limit points satisfy AKKT. However, as pointed out by  Haeser \cite{Haeser2016}, a point may satisfy AKKT without being optimal or without satisfying KKT. This observation led to the study of constraint qualifications (CQ) associated with sequential conditions. In their work, Andreani et al. \cite{Andreani2016} introduced the Cone Continuity Property (CCP), which is the weakest condition under which AKKT implies KKT. For AGP, the minimum condition that guarantees KKT is called AGP-regularity and is defined in \cite{Andreani2018}. Following the work of Andreani et al. \cite{Andreani2011}, B\"{o}rgens et al.  \cite{Borgens2020} extended the concept of AKKT conditions to problems posed in infinite-dimensional spaces, specifically in real Banach spaces, and proposed the AKKT regularity constraint qualification. Their results are the most general among a number of related works, including \cite{Borgens2019, Kanzow2018, Steck2018}.

In the context of optimal control theory, the study of sequential-type optimality conditions was initiated by Moreira and de Oliveira in \cite{Moreira2024art}. In their work, Moreira and de Oliveira present a version of the maximum principle in sequential terms for optimal control problems with mixed constraints. As a result, they establish a set of optimality conditions referred to as the Asymptotic Weak Maximum Principle (AWMP). The problem studied in \cite{Moreira2024art} is the same as Problem (P), which is given by
\begin{subequations}
	\begin{align}
		\nonumber
		&\text{minimize } \ \ell(x(t_{0}), x(t_{1}))\\
		\nonumber
		&\text{over } \ (x, u) \in W^{1,1}([t_{0}, t_{1}]; \mathbb{R}^{n})\times L^{1}([t_{0}, t_{1}]; \mathbb{R}^{m}) \text{ satisfying}\\
		&\hspace{0.9 cm} \dot{x}(t) = f(t, x(t), u(t)) \text{ a.e. in } [t_{0}, t_{1}],
		\label{EqP_din}\\
		&\hspace{0.9 cm}0 = b(t, x(t), u(t)) \text{ a.e. in } [t_{0}, t_{1}],
		\label{EqP_Rigu}\\
		&\hspace{0.9 cm}0 \geq g(t, x(t), u(t)) \text{ a.e. in } [t_{0}, t_{1}],
		\label{EqP_Rdesi}\\
		&\hspace{0.9 cm}u(t) \in  U(t) \text{ a.e. in } [t_{0}, t_{1}],
		\label{EqP_Rmist}\\
		&\hspace{0.9 cm}(x(t_{0}), x(t_{1})) \in C, 
		\label{EqP_Rendp}
	\end{align}
\end{subequations}
where the interval $[t_{0}, t_{1}]$, the functions $\ell: \mathbb{R}^{n}\times\mathbb{R}^{n} \rightarrow \mathbb{R}$, $f: [t_{0}, t_{1}]\times\mathbb{R}^{n}\times\mathbb{R}^{m} \rightarrow \mathbb{R}^{n}$, $b: [t_{0}, t_{1}]\times \mathbb{R}^{n}\times\mathbb{R}^{m} \rightarrow \mathbb{R}^{m_{b}}$, and $g: [t_{0}, t_{1}]\times\mathbb{R} ^{n}\times\mathbb{R}^{m} \rightarrow \mathbb{R}^{m_{g}}$, the non-empty set-valued map $U: [t_{0}, t_{1}] \rightsquigarrow \mathbb{R}^{m}$ and the set $C \subset \mathbb{R}^{n} \times \mathbb{R}^{n}$ are given.

Although AWMP constitutes a genuine set of necessary optimality conditions that are very useful in the design of numerical methods of solution, it may be satisfied by processes that are not extremals. The main goal of this work is to present a new constraint qualification that has the property of being the weakest one under which AWMP implies the classical weak maximum principle. Drawing inspiration from recent studies in the area of nonlinear programming, particularly the cone continuity property (CCP) \cite{Andreani2011}, the asymptotically Mordukhovich-regularity (AM-regularity) \cite{Mehlitz2020}, and the AKKT-regularity \cite{Borgens2020}, we propose this novel constraint qualification, which we term AWMP-regularity. We demonstrate that if the problem is AWMP-regular, then every local optimal solution satisfies the weak maximum principle. 

It is widely acknowledged that constraint qualifications play a pivotal role in ensuring the validity of the maximum principle in optimal control problems with mixed constraints. One of the most well-known of these is the interiority condition, as presented in  de Pinho et al. \cite{dePinho2001}. It should be noted that the proof of the maximum principle for Problem (P) under this condition requires an additional assumption: the convexity of the set $\{(f(t, x, u), b(t, x, u), g(t, x, u)) : u \in U(t)\}$ for each $t \in [t_{0}, t_{1}]$. In contrast, the formulation of AKKT-regularity does not impose this kind of assumption.

The Weak Basic Constraint Qualification (WBCQ) was introduced by Li and Ye in \cite{Li2016} for autonomous problems. In deriving the maximum principle under WBCQ, additional assumptions must be made, such as the calmness condition and the compactness of a specific set defined along the optimal process.

Other important constraint qualifications are those of the full-rank type. There are different versions in the literature, among which we highlight the approaches adopted by Hestenes \cite{Hestenes1966}, Neustadt \cite{Neustadt1976}, Osmolovskii \cite{Osmolovskii1975}, de Pinho and Ilchmann \cite{dePinho2002}, and de Pinho \cite{dePinho2003}. In addition to full-rank constraint qualifications, there are also constant-rank types, such as proposed in Andreani et al. in \cite{Andreani2020}.

The well-known Mangasarian-Fromovitz condition also ensures the validity of the maximum principle, as demonstrated by Dmitruk in \cite{Dmitruk1993} and Pinho and Rosenblueth in \cite{dePinho2008}. More recently, it has been reformulated as the Calibrated Constraint Qualification (CCQ), which is also applicable to nonsmooth problems and more general formulations than (P), as shown by Clarke and de Pinho in \cite{Clarke2010}.

The extant literature also encompasses other constraint qualifications, such as those presented by Arutyunov and Karamzin in \cite{Arutyunov2015} and by Arutyunov et al. in \cite{Arutyunov2010}. In both works, the authors argue that the notion of regularity they propose in fact implies the Robinson Constraint Qualification \cite[p. 426]{Mordukhovich2006}. 

The relationship between CCP, AKKT-regularity, AM-regularity, and classical constraint qualifications has been established in the literature (see, \cite{Andreani2016,Borgens2020,Mehlitz2020}). Here, however, we do not relate AWMP-regularity to any of the constraint qualifications found in the literature on mixed constrained optimal control problems. Instead, we propose an asymptotic version of the calibrated constraint qualification and demonstrate that it implies AWMP-regularity.

The sequential optimality conditions provided by the asymptotic weak maximum principle \cite{Moreira2024art} differ from other traditional necessary optimality conditions found in previous work from the literature. The aforementioned conditions are given in sequential terms and fit precisely to sequences of control processes generated by numerical methods, such as the method of multipliers proposed in the same article. In summary, these optimality conditions possess significant practical potential, as there exist tools capable of generating sequences that obey them. Furthermore, AWMP-regularity establishes minimal conditions under which the sequential conditions imply the classical optimality conditions furnished by the weak maximum principle. Consequently, the asymptotic weak maximum principle provides a theoretical framework that can be effectively utilized to identify optimal solutions for control problems with mixed constraints. When the AWMP-regularity condition is satisfied, this set of solutions is filtered with the same precision as the weak maximum principle.

The present work is structured as follows. In Section \ref{Sec_Prel}, the main definitions, notations, and some important results that will be used throughout the text are introduced. In Section \ref{Sec_New_CQ}, the AWMP-regularity is properly defined and shown to be a constraint qualification. The Section \ref{Suff_Crit} provides sufficiency criteria for the validity of AWMP-regularity. The Section \ref{Sec_Concl} presents a summary of conclusions drawn from the study.

\section{Preliminaries} \label{Sec_Prel}

In this work, we denote strong, weak, and weak$^\star$ convergence by $\to$, $\rightharpoonup$, and $\stackrel{\star}{\rightharpoonup}$, respectively. 

The closed unit ball centered at the origin in a Euclidean space is denoted by $B$ (regardless of the dimension). A Euclidean norm is denoted by $\vert\cdot\vert$.

We will use $A^{T}$ to denote the transpose of a matrix $A$. Given $a \in \mathbb{R}$, the $n$-dimensional column vector with entries all set equal to $a$ is denoted by $\mathbf{a}_{n \times 1}$.

We write $L^{p}_{l}$ with $1 \leq p \leq \infty$ to denote the classical spaces $L^{p}([t_{0}, t_{1}]; \mathbb{R}^{l})$ and $W_{n}^{1,1}$ for the space of absolutely continuous functions $W^{1,1}([t_{0}, t_{1}];\mathbb{R}^{n})$.

Given $(t,x,u) \in [t_0,t_1] \times \mathbb{R}^n \times \mathbb{R}^m$, we denote $g_{j}^{-}(t, x, u) := \max\{-g_{j}(t, x, u), 0\}$, $j=1,\ldots,m_g$.

The augmented Pontryagin function $\mathcal{H}: [t_{0}, t_{1}] \times \mathbb{R}^{n} \times \mathbb{R}^{n} \times \mathbb{R}^{m_{b}} \times \mathbb{R}^{m_{g}} \times \mathbb{R}^{m} \rightarrow \mathbb{R}$ associated with (P) is defined by
\begin{align*}
\mathcal{H}(t, x, p, q, r, u) := p\cdot f(t, x, u) + q\cdot b(t, x, u)  + r\cdot g(t, x, u).
\end{align*}

In addition to the notation introduced earlier, we will also briefly review some key definitions and results that are essential for this paper. First, recall from Vinter \cite[p. 202]{Vinter2000} that a \emph{process} is defined as a pair $(x, u)$, where $u \in L^{1}_{m}$ represents a control and $x \in W_{n}^{1,1}$ is an arc that solves \eqref{EqP_din}. A process $(\bar{x}, \bar{u})$ that satisfies \eqref{EqP_Rigu}, \eqref{EqP_Rdesi}, \eqref{EqP_Rmist}, and \eqref{EqP_Rendp} is termed a \emph{feasible process}. A feasible process $(\bar{x}, \bar{u})$ is called a \emph{weak local minimum} of (P) if there exists $\delta > 0$ such that 
$$
\ell(\bar{x}(t_{0}), \bar{x}(t_{1})) \leq \ell(x(t_{0}), x(t_{1}))
$$ 
for all feasible processes $(x, u)$ satisfying 
$$
(x(t), u(t)) \in \Omega_{\delta}(t) := \{ (y, z) \in \mathbb{R}^{n} \times \mathbb{R}^{m} : |(y, z) - (\bar{x}(t), \bar{u}(t))| \leq \delta \} ~ \text{a.e in} ~ [t_{0}, t_{1}].
$$ 
A feasible process $(\bar{x}, \bar{u})$ is \emph{stationary} for Problem (P) if it satisfies the weak maximum principle conditions (see de Pinho and Vinter \cite[Theorem 3.2]{dePinho1995}, for example; see also Theorem \ref{Teo_PMF_Versao_Class} below).

This paper employs a number of standard concepts from nonsmooth analysis. Here, we merely provide a concise summary of the key definitions; for a more exhaustive presentation, please refer to \cite{Clarke1998, Clarke1983, Vinter2000}.

Let $S \subset \mathbb{R}^{n}$ be a closed set, and let $\bar{x} \in S$. A vector $v \in \mathbb{R}^{n}$ is called a \textit{limiting normal} to $S$ at $\bar{x}$ if there exist sequences $v^{\sind} \rightarrow v$, $x^{\sind} \rightarrow \bar{x}$, with $x^{\sind} \in S$, and $\{M_{\sind}\} \subset \mathbb{R}$, with $M_\sind \geq 0$, such that 
\begin{displaymath}
v^{\sind}\cdot(x - x^{\sind}) \leq M_{\sind} |x - x^{\sind}|^{2} ~ \forall \, x \in S, ~ \forall \, \sind \in \mathbb{N}.
\end{displaymath}
The set of all limiting normals to $S$ at $\bar{x}$, denoted $\mathcal{N}_{S}(\bar{x})$, forms a (possibly nonconvex) cone known as the \emph{limiting normal cone} to $S$ at $\bar{x}$, or the \emph{Mordukhovich normal cone}.

Consider a lower semicontinuous function $f: \mathbb{R}^{n} \rightarrow \mathbb{R} \cup \{+\infty\}$, and let $\bar{x} \in \mathbb{R}^{n}$ with $f(\bar{x}) < +\infty$. The \emph{limit subdifferential} of $f$ at $\bar{x}$ is defined as
\begin{align*}
	\partial f(\bar{x}) := \left\{\xi : (\xi, -1) \in \mathcal{N}_{\epi f}(\bar{x}, f(\bar{x}))\right\},
\end{align*}
where $\epi f := \{(\eta, x) : \eta \geq f(x)\}$ is the \emph{epigraph of $f$}. The limit subdifferential of $f$ at $x$ is also known as the \emph{Mordukhovich subdifferential}.

Let $(\bar{x}, \bar{u})$ be a reference process for (P). Given a parameter $\delta > 0$, we will consider the following fundamental assumptions:
\begin{itemize}
	\item[(H1)] The function $f(\cdot, x, u)$ is Lebesgue measurable for each $(x, u)$ and there exists an integrable function $k_{f}$ such that, almost everywhere in $[t_{0}, t_{1}]$,
	\begin{align*}
		|f(t, x, u) - f(t, x^\prime, u^\prime)| \leq k_{f}(t) |(x, u) - (x^\prime, u^\prime)| ~ \forall \, (x, u), \, (x^\prime, u^\prime) \in \Omega_\delta(t).
	\end{align*}

	\item[(H2)] The graph of $U$, denoted by $\gr(U) := \{(t,v) : t \in [t_{0}, t_{1}] \text{ and } v \in U(t)\}$, is a Borel measurable set and $U_{\delta}(t) := (\bar{u}(t) + \delta B)\cap U(t)$ is closed and convex almost everywhere in $[t_{0}, t_{1}]$.

	\item[(H3)] The set $C$ is closed and the function $\ell$ is locally Lipschitz in a neighborhood of $(\bar{x}(t_{0}), \bar{x}(t_{1}))$.

	\item[(H4)] The function $(b, g)(\cdot, x, u)$ is Lebesgue measurable for each $(x,u)$.

	\item[(H5)] The function $(b, g)(t, \cdot, \cdot)$ is continuously differentiable on $(\bar{x}(t), \bar{u}(t)) + \delta B$ almost everywhere in $[t_{0}, t_{1}]$ and there exists an integrable function $k_{bg}$ such that, almost everywhere in $[t_{0}, t_{1}]$,
	\begin{align*}
		|(b, g)(t, x, u) - (b, g)(t, x^{\prime}, u^{\prime})| \leq k_{bg}(t)|(x, u) - (x^{\prime}, u^{\prime})|
	\end{align*}
    for all $(x, u), \, (x^{\prime}, u^{\prime}) \in \Omega_{\delta}(t)$.

    \item[(H6)]	The functions $K_{f}(t) := |f( t, \bar{x}(t), \bar{u}(t))|$ and $K_{bg}(t) := |(b, g)(t , \bar{x}(t), \bar{u}(t))|$ are integrable over $[t_{0}, t_{1}]$ and $\bar{u} \in L_{m}^{\infty}$.
    
    \item[(H7)] Let  
\[
D := \left\{ (x, v) \in W_{n}^{1,1} \times L_{m}^{2} : (x(t), v(t)) \in \Omega_\delta(t) ~ \text{a.e. in} ~ [t_{0}, t_{1}] \right\}.
\]
The mappings $\mathcal{F}: D \rightarrow L_{n}^{\infty}$ and $\mathcal{P}: D \rightarrow \mathbb{R}$ defined respectively by
\[
t \mapsto \mathcal{F}(x, v)(t) := \int_{t_{0}}^{t} f(s, x(s), v(s)) \dif s  ~ \text{a.e. in} ~ [t_{0}, t_{1}]
\] 
and
\begin{align*}
\mathcal{P}(x, v) := \int_{t_{0}}^{t_{1}} \left[\sum_{i=1}^{m_{b}} (b_{i}(t, x(t), v(t)))^{2} + \sum_{j=1}^{m_{g}} (\max \{g_{j}(t, x(t), v(t)), 0\})^{2} \right] \dif t
\end{align*}
are completely continuous and weakly sequentially lower semicontinuous, respectively.
\end{itemize}

Assuming the validity of (H1)--(H6), and under certain other hypotheses such as a constraint qualification on the mixed constraints, see references \cite{Andreani2020, Arutyunov2000, dePinho2002, dePinho2009, dePinho2008, Dmitruk1993, Pereira2020, Schwarzkopf1976}, the classical weak maximum principle is presented as follows.

\begin{theorem}[Classical Weak Maximum Principle (WMP)]
\label{Teo_PMF_Versao_Class} 
Let $(\bar{x}, \bar{u})$ be a weak local minimizer for Problem (P). Then there exist $\lambda \in [0, +\infty)$, $p \in W_{n}^{1,1}$, $q \in L_{m_{b}}^{1}$, $r \in L_{m_{g}}^{1}$, and $\zeta \in L_{m}^{1}$ such that 
\begin{align*} 
    &\lambda + \Vert p\Vert_{L_{n}^{\infty}} \neq 0,\\
    &(-\dot{p} (t), \zeta(t)) \in \co\partial_{x, u} \mathcal{H}(t, \bar{x}(t), p(t), q(t), r(t), \bar{u}(t)) ~ \text{a.e.~in} ~ [t_{0}, t_{1}],\\ 
    &\zeta(t) \in \co\mathcal{N}_{U(t)}(\bar{u}(t)) ~ \text{a.e.~in} ~ [t_{0}, t_{1}],\\
    &r(t)\cdot g(t, \bar{x}(t), \bar{u}(t)) = 0 ~ \text{and} ~ r(t)\leq 0 ~ \text{a.e.~in} ~ [t_{0}, t_{1}],\\
    &(p(t_{0}), -p(t_{1})) \in \lambda \partial \ell(\bar{x}(t_{0}), \bar{x}(t_{1})) + \mathcal{N}_{C}(\bar{x}(t_{0}), \bar{x}(t_{1})). 
\end{align*} 
\end{theorem}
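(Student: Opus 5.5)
The Weak Maximum Principle is quoted from the literature. It holds under (H1)--(H6) \emph{together with} a constraint qualification on the mixed constraints, which the statement leaves implicit, so I read it as conditional on such a CQ. My plan is the standard penalization/perturbation route in the style of de Pinho and Vinter: turn the mixed constraints into penalty terms, apply a nonsmooth maximum principle for problems with only a pure control constraint $u\in U(t)$, and pass to the limit, using a regularity assumption to keep the multipliers bounded.

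\emph{Step 1: penalized problems.} Fix $\sqa_\sind\downarrow 0$. Consider the problems of minimizing
\[
\ell(x(t_0),x(t_1))+\sind\,\mathcal{P}(x,u)+\int_{t_0}^{t_1}|u-\bar u|^2\,\dif t+|x(t_0)-\bar x(t_0)|^2
\]
over processes with $u(t)\in U_\delta(t)$, $x(t)\in\bar x(t)+\delta B$, and $(x(t_0),x(t_1))\in C$. The hypotheses that make this well posed are the following. (H2) gives closed convex $U_\delta(t)$. (H7) gives complete continuity of $\mathcal{F}$ and weak lower semicontinuity of $\mathcal{P}$. Bounded minimizing sequences in $L^2_m$ therefore have weakly convergent subsequences, and existence of a minimizer $(x_\sind,u_\sind)$ follows. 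Comparing with the cost of $(\bar x,\bar u)$ and using weak local optimality, a standard argument shows $\mathcal{P}(x_\sind,u_\sind)\to0$ and $(x_\sind,u_\sind)\to(\bar x,\bar u)$ strongly. In particular, for large $\sind$ the state constraint $x\in \bar x+\delta B$ is inactive.

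\emph{Step 2: multipliers for each $\sind$.} Apply the nonsmooth maximum principle for problems with a control constraint only (Vinter, Ch.~6) to $(x_\sind,u_\sind)$. Put $q_\sind=2\sind\, b(t,x_\sind,u_\sind)$ and $r_\sind=-2\sind\,\max\{g(t,x_\sind,u_\sind),0\}$, which has the sign of the statement, $r_\sind\le 0$ (since $0\ge g$ is the feasible side). This yields $\lambda_\sind$, $p_\sind$, $q_\sind$, $r_\sind$, $\zeta_\sind$ satisfying the adjoint inclusion, the normal-cone condition on $\zeta_\sind$, and the transversality condition, each up to $o(1)$ perturbations coming from the quadratic terms. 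Normalize so that $\lambda_\sind+\|p_\sind\|_\infty=1$. In addition, $r_\sind(t)\cdot g(t,x_\sind,u_\sind)\ge 0$ is approximately zero, because $r_\sind$ is supported where $g_j>0$ and that set shrinks.

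\emph{Step 3: passage to the limit.} This is the main obstacle. Passing to the limit needs a uniform $L^1$ bound on $(q_\sind,r_\sind)$. This is exactly where the constraint qualification enters; for example, a uniform Mangasarian--Fromovitz or full-rank condition on $\nabla_u(b,g)$ relative to $U_\delta(t)$ bounds $|(q_\sind,r_\sind)|$ by a multiple of $k_f(t)|p_\sind(t)|$, or, in the calibrated form, by $k_f(t)$ itself. I would first try to obtain this bound by contradiction: if $\|(q_\sind,r_\sind)\|$ is unbounded, normalize by it and pass to a weak limit, which produces a nontrivial abnormal multiplier for the mixed constraints, contradicting the CQ. With the bound in hand, Dunford--Pettis gives weak $L^1$ limits for $(q_\sind,r_\sind,\zeta_\sind)$, and Arzelà--Ascoli gives uniform convergence of $p_\sind$. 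Nontriviality $\lambda+\|p\|_\infty\ne0$ survives the limit. Finally, closure properties of convexified limiting subdifferentials and normal cones under weak convergence (Vinter's compactness-of-trajectories lemma) give the adjoint inclusion, $\zeta\in\co\mathcal{N}_{U(t)}(\bar u(t))$, complementarity with $r\le 0$, and transversality via outer semicontinuity of $\partial\ell$ and $\mathcal{N}_C$.
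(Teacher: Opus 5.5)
The paper gives no proof of this statement. It quotes the weak maximum principle from the literature (de Pinho--Vinter and the other works cited), valid under (H1)--(H6) plus a constraint qualification on the mixed constraints. You are right to read it as conditional: without a CQ it is false, as the paper's own example (Andreani et al., Example 4.1) shows.

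Your penalization scheme is the standard route, but Step 2 has a genuine gap. Applied to the penalized problems, the nonsmooth maximum principle of Vinter, Ch.~6, gives only two things. First, the adjoint inclusion in $x$ alone, $-\dot p_k(t)\in\co\partial_x\mathcal{H}_k$. Second, the Weierstrass condition in $u$, from which you extract only $\zeta_k(t)\in\co\partial_u\mathcal{H}_k\cap\mathcal{N}_{U_\delta(t)}(u_k(t))$. Since (H1) makes $f$ merely Lipschitz, these separate partial conditions do not yield the \emph{joint} inclusion $(-\dot p,\zeta)\in\co\partial_{x,u}\mathcal{H}$ that the statement asserts. For example, for $h(x,u)=|x-u|$ at the origin, $\partial_x h\times\partial_u h=[-1,1]^2$, while $\partial_{x,u}h=\{(s,-s):|s|\le 1\}$. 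So, as written, your argument proves only the weaker ``separated'' conditions; it is fine when $f(t,\cdot,\cdot)$ is $C^1$. To get the statement, you need at the penalized level an Euler--Lagrange inclusion in joint subgradients for problems whose only control constraint is $u\in U_\delta(t)$. That is precisely the nontrivial content of the de Pinho--Vinter-type result being quoted. Your limit passage would then carry it over correctly.

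Step 3 also needs tightening.
\begin{itemize}
\item The CQ must be stated, and it must hold uniformly on $\Omega_\delta(t)$, because you apply it at $(x_k(t),u_k(t))$. It must also hold relative to $U_\delta(t)$, since the ball constraint in $U_\delta(t)$ can be active at $u_k(t)$ on sets of positive measure.
\item The bound should then come pointwise from the $u$-component of the penalized inclusion, giving something like $|(q_k,r_k)(t)|\le M(t)\,(k_f(t)+2\delta)$.
\item The contradiction route you would ``try first'' does not work. After normalizing by $\|(q_k,r_k)\|_{L^1}$, the sequence need not be uniformly integrable. Its weak limit can therefore vanish, so no nontrivial abnormal multiplier, and hence no contradiction, is produced.
\end{itemize}

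Smaller points:
\begin{itemize}
\item With cost $k\mathcal{P}$ and $\mathcal{H}=p\cdot f+q\cdot b+r\cdot g$, one gets $q_k=-2k\lambda_k b$ and $r_k=-2k\lambda_k\max\{g,0\}$, so $r_k\cdot g\le 0$.
\item The set $\{g_j>0\}$ need not shrink; only $k\mathcal{P}(x_k,u_k)\to 0$. Complementarity in the limit comes instead from $r_{k,j}(t)=0$ for large $k$ on $\{g_j(t,\bar x,\bar u)<0\}$, using a.e.\ convergence of $u_k$.
\item Invoking (H7) for existence adds a hypothesis the quoted results do not need. Ekeland's principle, presumably what your unused $\varepsilon_k$ was meant for, is the usual device.
\end{itemize}
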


The following is the definition of asymptotic weak maximum principle sequences.

\begin{definition}[AWMP Sequences]
\label{SeqAWMP}
    A sequence $\{(x^{\sind}, u^{\sind}, \zeta^{\sind}, p^{\sind}, q^{\sind}, r^{\sind}, \lambda_{\sind})\}\subset W_{n}^{1,1} \times  L^{1}_{m} \times L_{m}^{1}\times W_{n}^{1, 1} \times L_{m_{b}}^{1} \times L_{m_{g}}^{1} \times [0, +\infty)$ is called an \emph{Asymptotic Weak Maximum Principle sequence} (\emph{AWMP sequence}) if there exist $\delta > 0$ and sequences $\{(\sqa^{\sind}, \sqb^{\sind})\} \subset L_{n}^{1}\times L_{m}^{1}$, $\{\sqc^{\sind}_{j}\} \subset \mathscr{C} := \{\phi \in L_{1}^{1} : \phi(t) \leq 0\}$, $j=1, \dots, m_{g}$, and $\{(\sqd^{\sind}, \sqe^{\sind})\} \subset \mathbb{R}^{n}\times \mathbb{R}^{n}$ such that, for all $\sind\in \mathbb{N}$,
	\begin{enumerate}[label=(\roman*)]
	\item
	\label{iSeqAWMP}
        $\lambda_{\sind} + \Vert p^{\sind}\Vert_{L_{n}^{\infty}} = 1$;
		
		\item
		\label{iiSeqAWMP} $(-\dot{p}^{\sind}(t), \zeta^{\sind}(t)) - (\sqa^{\sind}(t), \sqb^{\sind}(t)) \in \co\partial_{x, u}\mathcal{H}(t, x^{\sind}(t), p^{\sind}(t), q^{\sind}(t), r^{\sind}(t), u^{\sind}(t))$ \\$ \text{a.e.~in} ~ [t_{0}, t_{1}]$;
		
		\item
		\label{iiiSeqAWMP}
		$\zeta^{\sind}(t) \in \mathcal{N}_{U_{\delta}(t)}(u^{\sind}(t)) ~ \text{a.e.~in} ~ [t_{0}, t_{1}]$;
		
		\item
		\label{ivSeqAWMP} $r_{j}^{\sind}(t) g_{j}^{-}(t, x^{\sind}(t), u^{\sind}(t)) = \sqc^{\sind}_{j}(t)$ and $r_{j}^{\sind}(t)\leq 0  ~ \text{a.e.~in} ~ [t_{0}, t_{1}], ~ j=1, \dots, m_{g}$;

		\item
		\label{vSeqAWMP} $(p^{\sind}(t_{0}), -p^{\sind}(t_{1})) -  (\sqd^{\sind}, \sqe^{\sind}) \in \lambda_{\sind}\partial \ell(x^{\sind}(t_{0}), x^{\sind}(t_{1})) + \mathcal{N}_{C}(x^{\sind}(t_{0}), x^{\sind}(t_{1}))$;
	\end{enumerate}
with $(\sqa^{\sind},\sqb^{\sind}) \rightharpoonup (0,0)$ in $L_{n}^{1} \times L_{m}^{1}$, $\sqc^{\sind}_{j}(t) \rightarrow 0$ almost everywhere in $[t_{0}, t_{1}]$, $j=1, \dots, m_{g}$, and $(\sqd^{\sind},\sqe^{\sind}) \rightarrow (0,0)$.
\end{definition}

For the purposes of this study, a minor adjustment has been made to the concept of AWMP sequences as introduced by Moreira and de Oliveira in \cite{Moreira2024art}. This refinement consists in changing the type of convergence of the sequences $(\sqa^{\sind},\sqb^{\sind})$ from pointwise to weak convergence. This modification renders the stationarity conditions, given by an Euler-Lagrange-type inclusion, more flexible, as demonstrated in Item \ref{iiSeqAWMP} of Definition \ref{SeqAWMP} above. It is important to mention that these modifications do not invalidate the results already proved in \cite{Moreira2024art}, since a few simple adjustments to their proofs will suffice.

It should be noted that, in Definition \ref{SeqAWMP}, no specific form of convergence is required for the sequence $\{(x^{\sind}, u^{\sind}, \zeta^{\sind}, p^{\sind}, q^{\sind}, r^{\sind}, \lambda_{\sind})\}$. By imposing a convergence criterion on the sequence $\{(x^{\sind}, u^{\sind})\}$ we arrive at the following definition.

\begin{definition}[Asymptotic Extremal] \label{Def_AWMP-Proces}
	A feasible process $(\bar{x}, \bar{u})$ of (P) is called an \emph{asymptotic extremal} if there exists an AWMP sequence $\{(x^{\sind} , u^{\sind}, \zeta^{\sind}, p^{\sind}, q^{\sind}, r^{\sind},$ $\lambda_{\sind})\}$ such that $x^{\sind} \rightarrow \bar{x}$ uniformly and $u^{\sind} \to \bar{u}$ in $L_{m}^{1}$.
\end{definition}

We make no specific assumptions about the convergence or boundedness of the sequence of multipliers $\{(q^{\sind}, r^{\sind})\}$. Furthermore, the uniform convergence of the sequence $\{x^{\sind}\}$ can be expressed as $\Vert x^{\sind} - \bar{x}\Vert_{L_{n}^{\infty }} \rightarrow 0$.

As previously stated, Theorem 3.3 from \cite{Moreira2024art} remains valid with the modification to the AWMP sequence concept, necessitating only adjustments to the portion of the proof that verifies the convergences, adapting them to the new type established in Definition \ref{SeqAWMP}. For the sake of clarity, the result is restated here, omitting the proof as it is quite lengthy; details can be found in \cite{Moreira2024art}.

\begin{theorem}[Asymptotic Weak Maximum Principle] \label{Teo_AWMP}
Let $(\bar{x}, \bar{u})$ be a weak local minimizer of (P). If, for some $\delta > 0$, Hypotheses (H1)--(H7) are satisfied, then $(\bar{x}, \bar{u})$ is an asymptotic extremal.
\end{theorem}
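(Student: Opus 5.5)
The plan is to follow the penalty and approximation scheme of \cite{Moreira2024art}. The argument builds a sequence of auxiliary, nearly unconstrained problems whose minimizers converge to $(\bar{x},\bar{u})$. Applying a standard maximum principle to these problems, with no mixed-constraint qualification, produces multipliers that form an AWMP sequence.

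First, localize. Fix $\delta>0$ from (H1)--(H7), shrinking it if needed so that it is also a weak-minimum radius. Introduce controls $v\in L_m^2$ with $v(t)\in U_\delta(t)$, and consider the penalized problem
\[
\text{minimize } \ \ell(x(t_0),x(t_1)) + \frac{\sind}{2}\,\mathcal{P}(x,v) + \frac12\Vert v-\bar{u}\Vert_{L^2_m}^2 + \frac12|x(t_0)-\bar{x}(t_0)|^2
\]
over $(x,v)\in D$ with $\dot{x}=f(t,x,v)$, $v(t)\in U_\delta(t)$ and $(x(t_0),x(t_1))\in C$. Existence of a minimizer $(x^\sind,u^\sind)$ would come from the direct method. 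Bounded minimizing sequences have weakly convergent subsequences in $L^2_m$, because $U_\delta(t)$ is convex, closed and bounded. Complete continuity of $\mathcal{F}$ (H7) gives uniform convergence of the arcs. Weak lower semicontinuity of $\mathcal{P}$ (H7), together with convexity of the $L^2$ term and closedness of $C$ (H3), passes the limit in the cost. Feasibility of $(\bar{x},\bar{u})$ then gives $\sind\,\mathcal{P}(x^\sind,u^\sind)$ bounded and the cost bounded by $\ell(\bar{x}(t_0),\bar{x}(t_1))$. Any weak limit is therefore feasible for (P) and lies in $\Omega_\delta$. Weak optimality, together with the proximal terms, forces $u^\sind\to\bar{u}$ strongly in $L^2_m$ (hence in $L^1_m$) and $x^\sind\to\bar{x}$ uniformly, since weak convergence plus convergence of norms yields strong convergence.

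Next, for large $\sind$ the minimizers lie in the interior of the $\delta$-tube in $L^\infty$ for $x$. The only remaining state-dependent constraint is the endpoint set, and the control constraint is $U_\delta$. One then applies a nonsmooth weak maximum principle for problems without mixed constraints, for instance in the form of \cite{Vinter2000} or \cite{dePinho1995}. This yields $\lambda_\sind$, $p^\sind$ normalized as in \ref{iSeqAWMP}, and $\zeta^\sind\in\mathcal{N}_{U_\delta(t)}(u^\sind(t))$, with Euler inclusion for the Hamiltonian
\[
p\cdot f + \lambda_\sind \sind \Big(\sum_i b_i^2+\sum_j (g_j^+)^2\Big) + \lambda_\sind (v-\bar{u})\cdot(\cdot).
\]
Define $q^\sind:=2\lambda_\sind\sind\, b(t,x^\sind,u^\sind)$ and $r^\sind_j:=-2\lambda_\sind\sind\, g_j^+$, with the sign chosen to match the convention $r\le0$ in \ref{ivSeqAWMP}. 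With these choices the penalty gradient equals $\nabla_{x,u}(q\cdot b + r\cdot g)$ after reconciling signs with $\mathcal{H}$; the smoothness of $b,g$ from (H5) makes this exact. The proximal term contributes $\sqb^\sind:=\lambda_\sind(u^\sind-\bar{u})$, which tends to zero strongly, hence weakly. The term from $\tfrac12|x(t_0)-\bar{x}(t_0)|^2$ gives $\sqd^\sind\to0$, and we take $\sqe^\sind=0$, $\sqa^\sind=0$.

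For complementarity, $r^\sind_j g_j^-=0$ identically, since $g_j^+g_j^-=0$. So $\sqc^\sind_j=0$ and \ref{ivSeqAWMP} holds trivially. Item \ref{iiiSeqAWMP} comes directly from the maximum principle applied with the control set $U_\delta(t)$, which is convex and closed by (H2), so its limiting normal cone is convex.

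I expect the main obstacle to be the existence and convergence step. It requires passing to the limit in the dynamics with only weak control convergence, which is exactly what the complete continuity hypothesis in (H7) is designed to provide. It also requires verifying that the auxiliary problems satisfy the hypotheses of the classical nonsmooth maximum principle, with integrable Lipschitz bounds for the penalty coming from (H5)--(H6). The remaining steps are bookkeeping to match items \ref{iSeqAWMP}--\ref{vSeqAWMP} of Definition \ref{SeqAWMP}.
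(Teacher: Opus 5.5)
The paper does not reprove this theorem; it defers to \cite{Moreira2024art}. Your architecture is the one hypotheses (H2), (H3) and (H7) are built for: quadratic penalization through $\mathcal{P}$, an $L^{2}$ proximal term, convergence to $(\bar{x},\bar{u})$ from weak minimality plus Gronwall, and multipliers $q^{\sind},r^{\sind}$ read off from the penalty gradient with $\sqc^{\sind}=0$. Your existence and convergence step is sound. The gap is in reaching the auxiliary maximum principle. Because $\Omega_{\delta}(t)$ is a ball in the joint $(x,u)$-space, membership in $D$ is the genuinely mixed constraint $|x(t)-\bar{x}(t)|^{2}+|v(t)-\bar{u}(t)|^{2}\leq\delta^{2}$. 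Uniform convergence $x^{\sind}\to\bar{x}$ does not make it inactive. Since $u^{\sind}\to\bar{u}$ only in $L^{2}_{m}$, $|u^{\sind}(t)-\bar{u}(t)|$ may equal or approach $\delta$ (which $U_{\delta}(t)$ permits) on sets of positive measure, and there the constraint is active and couples $x$ and $v$. So you have not shown that $(x^{\sind},u^{\sind})$ is a weak local minimizer of a problem whose only constraints are the dynamics, $v(t)\in U_{\delta}(t)$ and $C$, which is what the maximum principle you invoke requires. The repair is cheap. In the auxiliary problem, impose $v(t)\in U_{\delta'}(t)=U_{\delta}(t)\cap(\bar{u}(t)+\delta' B)$ with $\delta'<\delta$; this set is still closed and convex by (H2). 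Then $\Vert x-\bar{x}\Vert_{L^{\infty}_{n}}<(\delta^{2}-(\delta')^{2})^{1/2}$ makes the joint constraint strict for large $\sind$. You obtain $\zeta^{\sind}(t)\in\mathcal{N}_{U_{\delta'}(t)}(u^{\sind}(t))$, which suffices because Definition~\ref{SeqAWMP} only asks for some $\delta>0$.

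Second, you assert that (H5)--(H6) give an integrable Lipschitz modulus for the penalized running cost $L_{\sind}=\frac{\sind}{2}\bigl(|b|^{2}+\sum_{j}(\max\{g_{j},0\})^{2}\bigr)+\frac{1}{2}|v-\bar{u}|^{2}$. They do not. On the tube they only yield $|(b,g)|\leq K_{bg}+\delta k_{bg}$ and $|\nabla_{x,u}(b,g)|\leq k_{bg}$. The resulting bound $\sind(K_{bg}+\delta k_{bg})k_{bg}$ is a product of two $L^{1}$ functions and need not be integrable. The true modulus can also fail to be integrable even when $\mathcal{P}$ is finite on $D$; a narrow steep bump in $b(t,\cdot)$ whose width and height shrink and grow suitably as $t\to t_{0}$ does this. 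This integrability is exactly what the nonsmooth weak maximum principle needs once the running cost is absorbed into an extra state. It is also what guarantees $\dot{p}^{\sind}\in L^{1}_{n}$ and $\zeta^{\sind}\in L^{1}_{m}$, as Definition~\ref{SeqAWMP} demands. You need either an argument here or an explicit extra assumption, e.g.\ $K_{bg},k_{bg}\in L^{2}$. Finally, your bookkeeping is off, though harmlessly. With $\frac{\sind}{2}\mathcal{P}$ in the cost and Hamiltonian $p\cdot f-\lambda_{\sind}L_{\sind}$, the matching choices are $q^{\sind}=-\lambda_{\sind}\sind\, b(\cdot,x^{\sind},u^{\sind})$, $r^{\sind}_{j}=-\lambda_{\sind}\sind\max\{g_{j}(\cdot,x^{\sind},u^{\sind}),0\}$, $\sqb^{\sind}=-\lambda_{\sind}(u^{\sind}-\bar{u})$, $\sqd^{\sind}=\lambda_{\sind}(x^{\sind}(t_{0})-\bar{x}(t_{0}))$ and $\sqe^{\sind}=0$, with no factor $2$.
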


It is clear that the classical weak maximum principle implies the asymptotic weak maximum principle when considering constant sequences in Definition \ref{SeqAWMP}.

\section{A New Constraint Qualification for Mixed Constrained Optimal Control} \label{Sec_New_CQ}

In this section, guided by the notions explored in the context of nonlinear programming \cite{Andreani2011, Borgens2020, Mehlitz2020}, we investigate the conditions under which the necessary optimality conditions established in Theorem \ref{Teo_AWMP}, also referred to simply as AWMP conditions, imply the classical weak maximum principle (Theorem \ref{Teo_PMF_Versao_Class}). Specifically, in the finite-dimensional case for smooth problems, this involves the Cone Continuity Property (CCP) condition Andreani et al. \cite{Andreani2011}. In the finite-dimensional case for nonsmooth problems, the Asymptotically Mordukhovich-regularity (AM-regularity) condition Mehlitz \cite{Mehlitz2020}, and, in the infinite-dimensional case for smooth problems, the AKKT-regularity condition B\"{o}rgens et al. \cite{Borgens2020}. In accordance with the aforementioned inspiration, a novel regularity condition associated with the AWMP conditions is hereby proposed. This condition is referred to as AWMP-regularity. As will be demonstrated in this section, this new condition assumes the role of a constraint qualification, thereby ensuring the validity of the classical weak maximum principle.

In the consideration of the AWMP conditions, given $\delta > 0$ and $(\bar{x}, \bar{u}) \in W_{n}^{1,1} \times L_{m}^{1}$, for each 
$$
(x,u) \in \Omega_{\delta} := \{ (x,u) \in W_{n}^{1,1} \times L_{m}^{1} : (x(t),u(t)) \in \Omega_\delta(t) ~ \text{a.e. in} ~ [t_0,t_1] \},
$$ 
and $\sqc \in \mathscr{C}$, we consider the following two sets. 
\begin{definition}
$\mathcal{M}(x, u)$ is defined as the set of all $(\varphi, \psi, \gamma, \xi) \in L_{n}^{1}\times L_{m}^{1}\times \mathbb{R}^{n}\times \mathbb{R}^{n}$ such that
\begin{align*}
& (\varphi(t), \psi(t)) \in \co\partial_{x, u}(p(t)\cdot f(t, x(t), u(t))) -\{(-\dot{p}(t), 0)\} ~ \text{a.e. in} ~ [t_0,t_1], \\
& (\gamma, \xi) \in \lambda\partial\ell(x(t_{0}), x(t_{1})) - \{(p(t_{0}), -p(t_{1}))\}, \\
& (\lambda, p) \in [0, +\infty)\times W_{n}^{1,1} \text{ with } \lambda + \Vert p\Vert_{L_{n}^{\infty}} \neq 0.
\end{align*}
\end{definition}

\begin{definition}
$\mathcal{M}_{\delta}(x, u, \sqc)$, is defined as the set of all $(\varphi, \psi, \gamma, \xi)  \in L_{n}^{1}\times L_{m}^{1}\times \mathbb{R}^{n}\times \mathbb{R}^{n}$ satisfying
\begin{align*}
& (\varphi(t), \psi(t)) = \sum_{i=1}^{m_{b}}q_{i}(t)\nabla_{x, u}b_{i}(t, x(t), u(t)) + \sum_{j=1}^{m_{b}}r_{j}(t)\nabla_{x, u}g_{j}(t, x(t), u(t)) \\
& \hspace{2.1cm} + (0, -\zeta(t)) ~ \text{a.e. in} ~ [t_0,t_1], \\
& (q, r, \zeta) \in L_{m_{b}}^{1} \times L_{m_{g}}^{1} \times L_{m}^{1},  \quad \zeta(t) \in \mathcal{N}_{U_{\delta}(t)}(u(t)) ~ \text{a.e. in} ~ [t_0,t_1], \\
& r_{j}(t)g_{j}^{-}(t, x(t), u(t)) = \sqc_{j}(t), ~ r_{j}(t)  \leq 0 ~ \text{a.e. in} ~ [t_0,t_1], ~ j=1, \dots, m_{g}, \\
& |\psi(t)|\leq k_{\psi}(t) ~ \text{a.e. in} ~ [t_0,t_1] ~ \text{for some} ~ k_{\psi} \in L_{m}^{1} ~ \text{depending only on} ~ (\bar{x},\bar{u}) ~ \text{and} ~ \delta, \\
& (\gamma, \xi) \in \mathcal{N}_{C}(x(t_{0}), x(t_{1})).
\end{align*}
\end{definition}

The following lemma presents an alternative way to express the classical weak maximum principle (Theorem \ref{Teo_PMF_Versao_Class}).
\begin{lemma} 
\label{Lema_Rees_WMP}
    Let $(\bar{x}, \bar{u})$ be a feasible process for (P). If, for some $\delta > 0$, Hypotheses (H1)--(H6) are satisfied, then the conditions of the classical weak maximum principle (Theorem \ref{Teo_PMF_Versao_Class}) are satisfied at $(\bar{x}, \bar{u})$ if, and only if, 
    \begin{align*}
        \mathcal{M}(\bar{x}, \bar{u}) \cap \left(-\mathcal{M}_{\delta}(\bar{x}, \bar{u}, 0)\right) \neq \varnothing.
    \end{align*}
\end{lemma}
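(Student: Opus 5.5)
The plan is to prove both implications by unwinding the definitions of $\mathcal{M}$ and $\mathcal{M}_{\delta}$ and matching them with the conditions of Theorem \ref{Teo_PMF_Versao_Class}. The one structural fact needed is a sum rule for $\co\partial_{x,u}\mathcal{H}$.

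\emph{Key identity.} Under (H5), $(b,g)(t,\cdot,\cdot)$ is $C^1$ near $(\bar{x}(t),\bar{u}(t))$, so $q(t)\cdot b + r(t)\cdot g$ is strictly differentiable there. The limiting subdifferential of a Lipschitz function plus a strictly differentiable one is therefore exact. Taking convex hulls gives, a.e. in $[t_0,t_1]$,
\begin{align*}
\co\partial_{x,u}\mathcal{H}(t,\bar{x}(t),p(t),q(t),r(t),\bar{u}(t)) = \co\partial_{x,u}\bigl(p(t)\cdot f(t,\bar{x}(t),\bar{u}(t))\bigr) + \sum_{i=1}^{m_b} q_i(t)\nabla_{x,u} b_i + \sum_{j=1}^{m_g} r_j(t)\nabla_{x,u} g_j ,
\end{align*}
with all gradients evaluated at $(t,\bar{x}(t),\bar{u}(t))$. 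This is the step I will establish first.

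\emph{($\Rightarrow$).} Take the multipliers $(\lambda,p,q,r,\zeta)$ from Theorem \ref{Teo_PMF_Versao_Class}.
\begin{enumerate}[label=(\alph*)]
\item By the key identity, $-\dot{p}(t) = a(t) + \sum_i q_i\nabla_x b_i + \sum_j r_j\nabla_x g_j$ and $\zeta(t) = c(t) + \sum_i q_i\nabla_u b_i + \sum_j r_j\nabla_u g_j$, with $(a(t),c(t))\in\co\partial_{x,u}(p\cdot f)$. Measurable selection, as in the proofs of (H1)--(H5)-type results, makes $(a,c)$ measurable.
\item Set $(\varphi,\psi) := (a+\dot{p},\, c)$. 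Then $(\varphi,\psi)$ lies in the first set in the definition of $\mathcal{M}(\bar{x},\bar{u})$. By (a), $-(\varphi,\psi) = \sum_i q_i\nabla b_i + \sum_j r_j\nabla g_j + (0,-\zeta)$.
\item Write the endpoint condition as $(p(t_0),-p(t_1)) = \lambda w + \nu$ with $w\in\partial\ell$ and $\nu\in\mathcal{N}_C$, and set $(\gamma,\xi) := \lambda w - (p(t_0),-p(t_1)) = -\nu$.
\item Complementarity $r\cdot g=0$ with $r\le 0$ and $g\le 0$ gives $r_j g_j^-=0$, i.e.\ $\sqc=0$.
\item $\co\mathcal{N}_{U(t)}(\bar{u}(t)) = \mathcal{N}_{U_\delta(t)}(\bar{u}(t))$, by convexity of $U_\delta(t)$ from (H2) and locality of normal cones.
\item $|\psi(t)| = |c(t)| \le k_f(t)|p(t)| \le k_f(t)\Vert p\Vert_\infty$. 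After normalizing so that $\lambda+\Vert p\Vert_\infty = 1$ (the conditions are positively homogeneous), this gives $k_\psi = k_f$, which depends only on $(\bar{x},\bar{u})$ and $\delta$.
\end{enumerate}
Hence $(\varphi,\psi,\gamma,\xi)$ lies in $\mathcal{M}(\bar{x},\bar{u})\cap(-\mathcal{M}_\delta(\bar{x},\bar{u},0))$.

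\emph{($\Leftarrow$).} Reverse the computation. Given $(\varphi,\psi,\gamma,\xi)$ in the intersection, with data $(\lambda,p)$ from $\mathcal{M}$ and $(q,r,\zeta)$ from $\mathcal{M}_\delta$, equate the two representations:
\begin{align*}
(\varphi,\psi) = (a+\dot{p},\, c) = -\sum_i q_i\nabla b_i - \sum_j r_j\nabla g_j - (0,-\zeta).
\end{align*}
By the key identity this yields $(-\dot{p},\zeta)\in\co\partial_{x,u}\mathcal{H}$.
\begin{enumerate}[label=(\alph*)]
\item The endpoint relations give $(p(t_0),-p(t_1)) \in \lambda\partial\ell + \mathcal{N}_C$.
\item $r_j g_j^- = 0$ and $r_j\le0$ give $r\cdot g = 0$, because $g\le 0$ along the feasible $\bar{x},\bar{u}$, so $g_j^- = -g_j$.
\item $\zeta(t)\in\mathcal{N}_{U_\delta(t)}(\bar{u}(t)) \subset \co\mathcal{N}_{U(t)}(\bar{u}(t))$ by locality.
\item The nontriviality condition carries over directly.
\end{enumerate}

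\emph{Main obstacle.} I expect the main difficulty to be the technical integrability issues rather than the algebra. The definitions require $q\in L^1_{m_b}$, $r\in L^1_{m_g}$ and $\zeta\in L^1_m$, and $\psi$ must be dominated by $k_\psi$. The bound on $\psi$ via (H1) is easy. Measurability of the selections $(a,c)$ I would handle via the Aumann/Filippov selection theorem, using the measurable multifunction $t\mapsto\co\partial_{x,u}(p\cdot f)$. Integrability of $a = -\dot{p} - \sum_i q_i\nabla_x b_i - \sum_j r_j\nabla_x g_j$ follows because $|\nabla b|,|\nabla g|\le k_{bg}$. Strictly, products like $q_i k_{bg}$ need not be in $L^1$, so I would instead bound $|a|\le k_f\Vert p\Vert_\infty$ directly, which gives $a\in L^1$.
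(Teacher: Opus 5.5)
Your proposal is correct and follows the paper's proof essentially step by step. It uses the same sum rule for $\co\partial_{x,u}\mathcal{H}$ (the paper gets it from Clarke's calculus for generalized gradients rather than the limiting-subdifferential rule), the normalization $\lambda + \Vert p\Vert_{L_{n}^{\infty}} = 1$ giving $k_{\psi} := k_{f}$, the identity $\co\mathcal{N}_{U(t)}(\bar{u}(t)) = \mathcal{N}_{U_{\delta}(t)}(\bar{u}(t))$ from (H2), and complementarity via $g_{j}^{-} = -g_{j}$ along the feasible process. The only cosmetic difference is that the paper defines $(\varphi,\psi)$ directly as $-\sum_{i} q_{i}\nabla_{x,u}b_{i} - \sum_{j} r_{j}\nabla_{x,u}g_{j} + (0,\zeta)$; this shows your pair $(a,c)$ is explicitly determined by the multipliers, so you need no Aumann/Filippov selection argument for measurability.
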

\begin{proof} 
Assume that the classical weak maximum principle (Theorem \ref{Teo_PMF_Versao_Class}) is valid at $(\bar{x}, \bar{u})$. Then, after a normalization of the multipliers, there exist $(\lambda, p, \zeta, q, r) \in [0, +\infty) \times W_{n}^{1,1} \times L_{m}^{1} \times L_{m_{b}}^{1} \times L_{m_{g}}^{1}$ such that
\begin{align}
& \lambda + \Vert p \Vert_{L_{n}^{\infty}} = 1, \label{Eq_Dr_1} \\
& (-\dot{p}(t), \zeta(t)) \in \co\partial_{x, u} \mathcal{H}(t, \bar{x}(t), p(t), q(t), r(t), \bar{u}(t)) ~ \text{a.e.~in} ~ [t_0,t_1], \label{Eq_Dr_2} \\
& \zeta(t) \in \co\mathcal{N}_{U(t)}(\bar{u}(t)) ~ \text{a.e.~in} ~ [t_0,t_1], \label{Eq_Dr_3} \\
& r(t) \cdot g(t, \bar{x}(t), \bar{u}(t)) = 0 \text{ and } r(t) \leq 0 ~ \text{a.e.~in} ~ [t_0,t_1], \label{Eq_Dr_4} \\
& (p(t_{0}), -p(t_{1})) \in \lambda \partial \ell(\bar{x}(t_{0}), \bar{x}(t_{1})) + \mathcal{N}_{C}(\bar{x}(t_{0}), \bar{x}(t_{1})). 
\label{Eq_Dr_5}
\end{align} 
Since, by hypothesis, $(\bar{x}, \bar{u})$ is feasible, from \eqref{Eq_Dr_4} we have
\begin{align*}
r_{j}(t) g_{j}^{-}(t, \bar{x}(t), \bar{u}(t)) & = r_{j}(t)(\max\{-g_{j}(t, \bar{x}(t), \bar{u}(t)), 0\}) \\ & = r_{j}(t)(-g_{j}(t, \bar{x}(t), \bar{u}(t))) = 0
\end{align*}
almost everywhere in $[t_{0}, t_{1}]$, for $j = 1, \dots, m_{g}$. Setting
\begin{align*}
    (\varphi(t), \psi(t)) &:= -\sum_{i=1}^{m_{b}} q_{i}(t) \nabla_{x, u} b_{i}(t, \bar{x}(t), \bar{u}(t)) - \sum_{j=1}^{m_{g}} r_{j}(t) \nabla_{x, u} g_{j}(t, \bar{x}(t), \bar{u}(t))\\
    &\quad~~ + (0, \zeta(t)) ~\text{a.e.~in} ~ t \in [t_{0}, t_{1}],
\end{align*} 
from \eqref{Eq_Dr_2}, we get 
\begin{align} \label{Eq_Rev_101}
(\varphi(t), \psi(t)) \in \co\partial_{x, u}(p(t) \cdot f(t, \bar{x}(t), \bar{u}(t))) - (-\dot{p}(t), 0) \text{ a.e. in } [t_{0}, t_{1}].
\end{align}
By Clarke \cite[Proposition 2.6.4]{Clarke1983}, 
\begin{align*}
    \co\partial_{x, u}(p(t) \cdot f(t, \bar{x}(t), \bar{u}(t))) = p(t)D_{x, u}f(t, \bar{x}(t), \bar{u}(t))
\end{align*}
almost everywhere in $[t_{0}, t_{1}]$, and by \cite[Proposition 2.6.2]{Clarke1983}, $D_{x,u} f(t, \bar{x}(t), \bar{u}(t)) \subset k_{f}(t)B$ almost everywhere in $[t_{0}, t_{1}]$, where $D_{x,u}f(t,x,u)$ denotes the Clarke's Generalized Jacobian\footnote{See Clarke \cite[Def. 2.6.1]{Clarke1983} for details.} of $f$ with respect to $(x, u)$ at the point $(t, x, u)$. Thus, since $\dot{p} \in L_{n}^{1}$, because $p \in W_{n}^{1,1}$, using \eqref{Eq_Rev_101}, we can conclude that $(\varphi, \psi) \in L_{n}^{1} \times L_{m}^{1}$. Additionally, from \eqref{Eq_Dr_1} and \eqref{Eq_Rev_101}, by defining $k_{\psi}(t) := k_{f}(t)$ almost everywhere in $[t_{0}, t_{1}]$, we conclude that 
$$
|\psi(t)| \leq k_{\psi}(t) ~ \text{a.e.~in} ~ [t_{0}, t_{1}].
$$
It follows directly from \eqref{Eq_Dr_3} and (H2) that 
$$
\zeta(t) \in \co\mathcal{N}_{U(t)}(\bar{u}(t)) = \mathcal{N}_{U_{\delta}(t)}(\bar{u}(t)) ~ \text{a.e.~in} ~ [t_{0}, t_{1}]
$$ 
for any $\delta > 0$. From \eqref{Eq_Dr_5}, we see that 
$$
(\gamma, \xi) \in \lambda \partial \ell(\bar{x}(t_{0}), \bar{x}(t_{1})) - \{(p(t_{0}), -p(t_{1}))\}
$$ 
for some $-(\gamma, \xi) \in \mathcal{N}_{C}(\bar{x}(t_{0}), \bar{x}(t_{1}))$. Therefore, using \eqref{Eq_Dr_1}, we conclude the first part of the result.

Conversely, let $(\varphi, \psi, \gamma, \xi) \in \mathcal{M}(\bar{x}, \bar{u}) \cap \left(-\mathcal{M}_{\delta}(\bar{x}, \bar{u}, 0)\right)$. By definition, \linebreak $(\varphi, \psi, \gamma, \xi) \in L_{n}^{1} \times L_{m}^{1} \times \mathbb{R}^{n} \times \mathbb{R}^{n}$ and there exists $(\lambda, p, q, r, \zeta) \in [0, +\infty) \times W_{n}^{1,1} \times L_{m}^{1} \times L_{m_{b}}^{1} \times L_{m_{g}}^{1} \times L_{m}^1$ such that
\begin{align}
& (\varphi(t), \psi(t)) \in \co \partial_{x, u}(p(t) \cdot f(t, \bar{x}(t), \bar{u}(t))) - \{(-\dot{p}(t), 0)\} ~ \text{a.e.~in} ~ [t_{0}, t_{1}], \label{Eq_Dr_6} \\
& 
\nonumber
(\varphi(t), \psi(t)) = -\sum_{i=1}^{m_{b}} q_{i}(t) \nabla_{x, u} b_{i}(t, \bar{x}(t), \bar{u}(t)) - \sum_{j=1}^{m_{g}} r_{j}(t) \nabla_{x, u} g_{j}(t, \bar{x}(t), \bar{u}(t)) \\
&\hspace{2.15cm}+ (0, \zeta(t)) ~ \text{a.e.~in} ~ [t_{0}, t_{1}], \label{Eq_Dr_7} \\
& \zeta(t) \in \mathcal{N}_{U_{\delta}}(\bar{u}(t)) ~ \text{a.e.~in} ~ [t_{0}, t_{1}], \label{Eq_Dr_8} \\
& r_{j}(t) g_{j}^{-}(t, \bar{x}(t), \bar{u}(t)) = 0 \text{ and } r_{j}(t) \leq 0 ~ \text{a.e.~in} ~ [t_{0}, t_{1}], ~ j = 1, \dots, m_{g}, \label{Eq_Dr_9} \\
& (\gamma, \xi) \in \lambda \partial \ell(\bar{x}(t_{0}), \bar{x}(t_{1})) - \{(p(t_{0}), -p(t_{1}))\}, \label{Eq_Dr_10} \\
& -(\gamma, \xi) \in \mathcal{N}_{C}(\bar{x}(t_{0}), \bar{x}(t_{1})), \label{Eq_Dr_11} \\
& \lambda + \Vert p \Vert_{L_{n}^{\infty}} \neq 0. \label{Eq_Dr_12}
\end{align}
From \eqref{Eq_Dr_6} and \eqref{Eq_Dr_7}, we obtain
\begin{align*}
(-\dot{p}(t), \zeta(t)) & \in \co\partial_{x, u}(p(t)\cdot f(t, \bar{x}(t), \bar{u}(t))) \\
&\quad~ + \sum_{i=1}^{m_{b}}q_{i}(t)\nabla_{x, u}b_{i}(t, \bar{x}(t), \bar{u}(t)) + \sum_{j=1}^{m_{g}}r_{j}(t)\nabla_{x, u}g_{j}(t, \bar{x}(t), \bar{u}(t)) \\
& = \co \partial_{x, u}\mathcal{H}(t, \bar{x}(t), p(t), q(t), r(t), \bar{u}(t)) ~ \text{a.e.~in} ~ [t_0,t_1],
\end{align*} 
where the last equality holds because of \cite[Corollary of Proposition 2.2.1 and Corollary 1 of Proposition 2.3.3]{Clarke1983}. Furthermore, from \eqref{Eq_Dr_8}, 
$$
\zeta(t) \in \mathcal{N}_{U_{\delta}}(\bar{u}(t)) = \co\mathcal{N}_{U}(\bar{u}(t)) ~ \text{a.e.~in} ~ [t_0,t_1],
$$
and from \eqref{Eq_Dr_10}, \eqref{Eq_Dr_11}, and \eqref{Eq_Dr_12},
\begin{align*}
(p(t_{0}), -p(t_{1})) & \in \lambda \partial \ell(\bar{x}(t_{0}), \bar{x}(t_{1})) - \{(\gamma, \xi)\} \\ & \subset \lambda \partial \ell(\bar{x}(t_{0}), \bar{x}(t_{1})) + \mathcal{N}_{C}(\bar{x}(t_{0}), \bar{x}(t_{1}))
\end{align*}
with $\lambda + \Vert p \Vert_{L_{n}^{\infty}} \neq 0$. Now, note that from the feasibility of $(\bar{x}, \bar{u})$ and from \eqref{Eq_Dr_9}, 
\begin{align*}
r_{j}(t)(-g_{j}(t, \bar{x}(t), \bar{u}(t))) &= r_{j}(t)(\max\{-g_{j}(t, \bar{x}(t), \bar{u}(t)), 0\}) \\
& = r_{j}(t)g_{j}^{-}(t, \bar{x}(t), \bar{u}(t)) = 0 ~ \text{a.e.~in} ~ [t_0,t_1], ~ j = 1, \dots, m_{g},
\end{align*}
which concludes the proof.
\end{proof}

Lemma \ref{Lema_Reesc_AWMP}, stated and proved below, ensures that, under certain hypotheses, we can represent the AWMP conditions in an alternative form involving the weak outer/upper limit of the Painlevé-Kuratowski type of the set-valued map $\mathcal{M}_{\delta}: W_{n}^{1,1}\times L_{m}^{1}\times \mathscr{C} \rightsquigarrow L_{n}^{1}\times L_{m}^{1}\times \mathbb{R}^{n}\times \mathbb{R}^{n}$ given by
\begin{align*}
	\wl\limsup_{\genfrac{}{}{0pt}{}{x \to \bar{x}, u \to \bar{u}}{\sqc \to 0}}\mathcal{M}_{\delta}(x, u, \sqc) &:= 
	\left\{ (\varphi, \psi, \gamma, \xi) \in L_{n}^{1}\times L_{m}^{1}\times \mathbb{R}^{n}\times \mathbb{R}^{n} : \right.\\
    &\left. \qquad
    \exists ~ \{(x^{\sind}, u^{\sind}, \sqc^{\sind})\} \subset W_{n}^{1,1}\times L_{m}^{1}\times \mathscr{C} ~ \text{such that} \right.\\
	&\left. \qquad x^{\sind} \to \bar{x} \text{ uniformly}, ~ u^{\sind} \to \bar{u} \text{ in } L_{m}^{1}, \right.\\
	&\left. \qquad \sqc_{j}^{\sind}(t) \to 0 ~ \text{ a.e. in } [t_{0}, t_{1}], ~ j = 1, \dots, m_{g}, \right.\\
	&\left. \qquad (\varphi^{\sind}, \psi^{\sind}) \rightharpoonup (\varphi, \psi) \text{ in } L_{n}^{1}\times L_{m}^{1}, \quad (\gamma^{\sind}, \xi^{\sind}) \to (\gamma, \xi), \right.\\
	&\left. \qquad (\varphi^{\sind}, \psi^{\sind}, \gamma^{\sind}, \xi^{\sind}) \in \mathcal{M}_{\delta}(x^{\sind}, u^{\sind}, \sqc^{\sind}) \text{ for all } \sind \in \mathbb{N} 
    \right\}.
\end{align*}
\begin{lemma} \label{Lema_Reesc_AWMP}
Let $(\bar{x}, \bar{u})$ be an asymptotic extremal process for (P). Suppose that Hypotheses (H1)-(H6) are satisfied.  Regarding the Definition \ref{SeqAWMP}, let the associated AWMP sequence be given by 
$$
\{(x^{\sind}, u^{\sind}, \zeta^{\sind}, p^{\sind}, q^{\sind}, r^{\sind}, \lambda_{\sind})\} \subset W_{n}^{1,1} \times L_{m}^{1} \times L_{m}^{1} \times W_{n}^{1,1} \times L_{m_{b}}^{1} \times L_{m_{g}}^{1} \times [0, +\infty), 
$$
with $\{(\sqa^{\sind}, \sqb^{\sind})\} \subset L_{n}^{1} \times L_{m}^{1}$. Assume that
\begin{itemize}
\item[(H8)] there exist integrable functions $c_{\sqa}$ and $c_{\sqb}$,  depending only on $(\bar{x},\bar{u})$ and $\delta$, such that 
$$
|\sqa^{\sind}(t)| \leq c_{\sqa}(t) ~ \text{and} ~ |\sqb^{\sind}(t)| \leq c_{\sqb}(t) ~ \text{a.e.~in} ~ [t_{0}, t_{1}] ~ \forall \, \sind \in \mathbb{N};
$$
\item[(H9)] there exists an integrable function $c_{\varphi}$, depending only on $(\bar{x},\bar{u})$ and $\delta$, such that for all $\sind \in \mathbb{N}$ and almost everywhere in $[t_{0}, t_{1}]$,
\begin{align*}
\left\vert \sum_{i=1}^{m_{b}}q_{i}^{\sind}(t)\nabla_{x}b_{i}(t, x^{\sind}(t), u^{\sind}(t)) + \sum_{j=1}^{m_{g}}r_{j}^{\sind}(t)\nabla_{x}g_{j}(t, x^{\sind}(t), u^{\sind}(t))\right\vert \leq c_{\varphi}(t).
\end{align*}
\end{itemize}
Then
\begin{align*}
\mathcal{M}(\bar{x}, \bar{u})\cap \left(- \wl\limsup_{\genfrac{}{}{0pt}{}{x \to \bar{x}, u \to \bar{u}}{\sqc \to 0}}\mathcal{M}_{\delta}(x, u, \sqc)\right) \neq \varnothing.
\end{align*}
\end{lemma}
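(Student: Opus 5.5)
The plan is to build both required objects directly from the AWMP sequence attached to $(\bar{x},\bar{u})$ in Definition \ref{SeqAWMP}. For each $\sind$ I set
\begin{align*}
(\varphi^{\sind}(t),\psi^{\sind}(t)) &:= \sum_{i=1}^{m_{b}}q_{i}^{\sind}(t)\nabla_{x,u}b_{i}(t,x^{\sind}(t),u^{\sind}(t)) + \sum_{j=1}^{m_{g}}r_{j}^{\sind}(t)\nabla_{x,u}g_{j}(t,x^{\sind}(t),u^{\sind}(t)) + (0,-\zeta^{\sind}(t)).
\end{align*}
For the endpoint component I choose $(\gamma^{\sind},\xi^{\sind})\in\mathcal{N}_{C}(x^{\sind}(t_0),x^{\sind}(t_1))$ so that item \ref{vSeqAWMP} reads
\[
(p^{\sind}(t_0),-p^{\sind}(t_1))-(\sqd^{\sind},\sqe^{\sind})-(\gamma^{\sind},\xi^{\sind})\in\lambda_{\sind}\partial\ell(x^{\sind}(t_0),x^{\sind}(t_1)).
\]
Items \ref{iiiSeqAWMP} and \ref{ivSeqAWMP} then give $(\varphi^{\sind},\psi^{\sind},\gamma^{\sind},\xi^{\sind})\in\mathcal{M}_{\delta}(x^{\sind},u^{\sind},\sqc^{\sind})$, once the required bound on $\psi^{\sind}$ is checked.

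By Clarke's sum rules (as in the proof of Lemma \ref{Lema_Rees_WMP}), item \ref{iiSeqAWMP} becomes
\[
(-\dot p^{\sind}(t),\zeta^{\sind}(t))-(\sqa^{\sind}(t),\sqb^{\sind}(t)) \in p^{\sind}(t)D_{x,u}f(t,x^{\sind}(t),u^{\sind}(t)) + (\varphi^{\sind}(t),\psi^{\sind}(t)+\zeta^{\sind}(t)).
\]
Two bounds follow from this together with $\Vert p^{\sind}\Vert_{\infty}\le 1$ and $D_{x,u}f\subset k_f(t)B$ on $\Omega_\delta(t)$. First, $|\psi^{\sind}(t)|\le k_f(t)+c_{\sqb}(t)=:k_\psi(t)$, where $c_{\sqb}$ comes from (H8). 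Second, $|\dot p^{\sind}(t)|\le k_f(t)+c_{\sqa}(t)+c_{\varphi}(t)$, where $c_{\varphi}$ comes from (H9), which also gives $|\varphi^{\sind}|\le c_\varphi$. Both dominating functions depend only on $(\bar{x},\bar{u})$ and $\delta$, so the bound required in $\mathcal{M}_{\delta}$ holds. The $x^{\sind}$ lie in $\Omega_\delta$ for large $\sind$ by uniform convergence. For $u^{\sind}$, item \ref{iiiSeqAWMP} only makes sense if $u^{\sind}(t)\in U_{\delta}(t)$, which places $u^{\sind}(t)$ within $\delta$ of $\bar{u}(t)$.

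Next comes compactness, passing to subsequences throughout. Domination by integrable functions gives uniform integrability, so by Dunford--Pettis $(\varphi^{\sind},\psi^{\sind})\rightharpoonup(\varphi,\psi)$ in $L^1_n\times L^1_m$. The $p^{\sind}$ are uniformly bounded and equi-absolutely continuous, so by Arzel\`a--Ascoli $p^{\sind}\to p$ uniformly with $\dot p^{\sind}\rightharpoonup\dot p$ in $L^1$ and $p\in W^{1,1}_n$. Also $\lambda_{\sind}\to\lambda$, and passing to the limit in item \ref{iSeqAWMP} gives $\lambda+\Vert p\Vert_\infty=1$. Since $u^{\sind}\to\bar u$ in $L^1$, a further subsequence converges a.e. The sets $\partial\ell$ are bounded near $(\bar{x}(t_0),\bar{x}(t_1))$ by (H3), so $(\gamma^{\sind},\xi^{\sind})$ is bounded and converges to some $(\gamma,\xi)$. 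By construction, $(-\varphi,-\psi,-\gamma,-\xi)$ is then an element of $\wl\limsup\mathcal{M}_{\delta}(x,u,\sqc)$.

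It remains to show $(-\varphi,-\psi,-\gamma,-\xi)\in\mathcal{M}(\bar{x},\bar{u})$. For the endpoint condition, the closed graph of $\partial\ell$ gives $-(\gamma,\xi)\in\lambda\partial\ell(\bar{x}(t_0),\bar{x}(t_1))-(p(t_0),-p(t_1))$, using $(\sqd^{\sind},\sqe^{\sind})\to 0$. The main obstacle is the dynamic inclusion. Its time-dependent data are
\[
v^{\sind}(t):=(-\dot p^{\sind}(t)-\sqa^{\sind}(t)-\varphi^{\sind}(t),\ -\sqb^{\sind}(t)-\psi^{\sind}(t)),
\]
which satisfy $v^{\sind}(t)\in p^{\sind}(t)D_{x,u}f(t,x^{\sind}(t),u^{\sind}(t))$ and converge only weakly, to $(-\dot p-\varphi,-\psi)$, since $(\sqa^{\sind},\sqb^{\sind})\rightharpoonup 0$. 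To handle this I would invoke the standard compactness/closure lemma for measurable multifunctions (e.g.\ Vinter's Theorem 2.5.3, or Mazur's lemma combined with upper semicontinuity). The multifunction $(t,p,x,u)\mapsto p\,D_{x,u}f(t,x,u)$ has closed convex values and a closed graph in $(p,x,u)$, and $(p^{\sind},x^{\sind},u^{\sind})\to(p,\bar x,\bar u)$ a.e. Convex combinations of the tails of $v^{\sind}$ converge strongly, hence a.e. along a subsequence, and this yields
\[
(-\dot p(t)-\varphi(t),-\psi(t))\in\co\partial_{x,u}(p(t)\cdot f(t,\bar x(t),\bar u(t))) \quad \text{a.e.,}
\]
which is exactly the membership $(-\varphi,-\psi)\in\co\partial_{x,u}(p\cdot f)-\{(-\dot p,0)\}$ required by $\mathcal{M}(\bar{x},\bar{u})$.
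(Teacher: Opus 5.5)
Your proposal is correct and follows essentially the paper's proof. It builds the same $(\varphi^{\sind},\psi^{\sind},\gamma^{\sind},\xi^{\sind})$ from the AWMP multipliers; you take elements of $\mathcal{M}_{\delta}(x^{\sind},u^{\sind},\sqc^{\sind})$ and negate at the end, whereas the paper takes elements of $-\mathcal{M}_{\delta}$ directly. It uses the same bounds from (H1), (H8), (H9) to invoke Dunford--Pettis, and the same closure argument for the Euler--Lagrange and endpoint inclusions: your Mazur/Vinter step is what the paper calls a modification of Clarke's Trajectory Compactness Theorem. One small sign slip: it is $(\varphi,\psi,\gamma,\xi)$, not $(-\varphi,-\psi,-\gamma,-\xi)$, that lies in $\wl\limsup\mathcal{M}_{\delta}$. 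That is exactly what your argument needs, since it gives $(-\varphi,-\psi,-\gamma,-\xi)\in\mathcal{M}(\bar{x},\bar{u})\cap(-\wl\limsup\mathcal{M}_{\delta})$.
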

\begin{proof}
As, by hypothesis, $(\bar{x}, \bar{u})$ is an asymptotic extremal process for (P), then, by definition, there exist $\delta > 0$ and sequences $\{(x^{\sind}, u^{\sind}, \zeta^{\sind}, p^{\sind}, q^{\sind}, r^{\sind}, \lambda_{\sind})\} \subset W_{n}^{1,1} \times L_{m}^{1} \times L_{m}^{1} \times W_{n}^{1,1} \times L_{m_{b}}^{1} \times L_{m_{g}}^{1} \times [0, +\infty)$, $\{(\sqa^{\sind}, \sqb^{\sind})\} \subset L_{n}^{1} \times L_{m}^{1}$, $\{\sqc^{\sind}\} \subset \mathscr{C}$ and $\{(\sqd^{\sind}, \sqe^{\sind})\} \subset \mathbb{R}^{n} \times \mathbb{R}^{n}$ such that, for each $\sind \in \mathbb{N}$,
	\begin{align}
		&\lambda_{\sind} + \Vert p^{\sind}\Vert_{L_{n}^{\infty}} = 1,
	\label{Eq_Dr_13}\\
    \nonumber
		&(-\dot{p}^{\sind}(t), \zeta^{\sind}(t)) - (\sqa^{\sind}(t), \sqb^{\sind}(t))  \\
        &\hspace{2.2cm} \in \co\partial_{x, u}\mathcal{H}(t, x^{\sind}(t), p^{\sind}(t), q^{\sind}(t), r^{\sind}(t), u^{\sind}(t)) ~ \text{a.e.~in} ~ [t_{0}, t_{1}],
	\label{Eq_Dr_14}\\
		&\zeta^{\sind}(t) \in \mathcal{N}_{U_{\delta}(t)}(u^{\sind}(t)) ~ \text{a.e.~in} ~ [t_{0}, t_{1}],
	\label{Eq_Dr_15}\\
		&r_{j}^{\sind}(t) g_{j}^{-}(t, x^{\sind}(t), u^{\sind}(t)) = \sqc^{\sind}_{j}(t) ~ \text{and} ~ r_{j}^{\sind}(t) \leq 0 ~ \text{a.e.~in} ~ [t_{0}, t_{1}], ~ j=1, \dots, m_{g}, 
	\label{Eq_Dr_16}\\
		&(p^{\sind}(t_{0}), -p^{\sind}(t_{1})) - (\sqd^{\sind}, \sqe^{\sind}) \in \lambda_{\sind}\partial \ell(x^{\sind}(t_{0}), x^{\sind}(t_{1})) + \mathcal{N}_{C}(x^{\sind}(t_{0}), x^{\sind}(t_{1})),
	\label{Eq_Dr_17}
	\end{align}
where $(\sqa^{\sind},\sqb^{\sind}) \rightharpoonup (0,0)$ in $L_{n}^{1} \times L_{m}^{1}$, $\sqc^{\sind}_{j}(t) \to 0$ almost everywhere in $[t_{0}, t_{1}]$ for $j=1, \dots, m_{g}$, $(\sqd^{\sind},\sqe^{\sind}) \to (0,0)$, $x^{\sind} \to \bar{x}$ uniformly, and $u^{\sind} \to \bar{u}$ in $L_{m}^{1}$. 
	
It is implicit in \eqref{Eq_Dr_15} that $u^{\sind}(t) \in U_{\delta}(t)$ for each $\sind \in \mathbb{N}$ almost everywhere in $[t_{0}, t_{1}]$, as otherwise, by the definition of the normal cone, $\mathcal{N}_{U_{\delta}(t)}(u^{\sind}(t))$ would be empty.
	
By \eqref{Eq_Dr_17}, for each $\sind \in \mathbb{N}$, there exists 
\begin{align} \label{Eq_Revv_2}
(\pi_{1}^{\sind}, \pi_{2}^{\sind}) \in \mathcal{N}_{C}(x^{\sind}(t_{0}), x^{\sind}(t_{1}))
\end{align}
such that 
\begin{align}
(p^{\sind}(t_{0}), -p^{\sind}(t_{1})) - (\sqd^{\sind}, \sqe^{\sind}) \in \lambda_{\sind}\partial \ell(x^{\sind}(t_{0}), x^{\sind}(t_{1})) + \{(\pi_{1}^{\sind}, \pi_{2}^{\sind})\}.
\label{Eq_Revv_1}
\end{align}

Let us define
\begin{align} 
\nonumber
& (\varphi^{\sind}(t), \psi^{\sind}(t), \gamma^{\sind}, \xi^{\sind}) :=  \\
& \quad \left(-\sum_{i=1}^{m_{b}}q_{i}^{\sind}(t)\nabla_{x, u}b_{i}(t, x^{\sind}(t), u^{\sind}(t)) - \sum_{j=1}^{m_{g}}r_{j}^{\sind}(t)\nabla_{x, u}g_{j}(t, x^{\sind}(t), u^{\sind}(t)), 0, 0\right) \nonumber \\
& \qquad ~ + (0, \zeta^{\sind}(t), -\pi_{1}^{\sind}, -\pi_{2}^{\sind}) ~ \text{a.e.~in} ~ [t_{0}, t_{1}] ~ \forall \, \sind \in \mathbb{N}. \label{Def}
\end{align}

By \eqref{Eq_Dr_13}, $\lambda_{\sind} \leq 1$ and $|p^{\sind}(t_{1})| \leq 1$ for each $\sind \in \mathbb{N}$. Thus, we can extract convergent subsequences, say $\lambda_{\sind} \to \lambda$ and $p^{\sind}(t_{1}) \to p_{1}$ (again, we do not relabel), with $(\lambda, p_{1}) \in [0, +\infty) \times \mathbb{R}^{n}$.

From \eqref{Eq_Dr_14} and \eqref{Eq_Revv_1}, for each $\sind \in \mathbb{N}$,
\begin{align}
\nonumber
(\varphi^{\sind}(t), \psi^{\sind}(t)) - (\sqa^{\sind}(t), \sqb^{\sind}(t)) &\in \co\partial_{x, u}(p^{\sind}(t)\cdot f(t, x^{\sind}(t), u^{\sind}(t))) \\
&\quad - \{(-\dot{p}^{\sind}(t), 0)\} ~ \text{a.e.~in} ~ [t_{0}, t_{1}]
\label{Eq_Dr_18}
\end{align}
and
\begin{align}
(\gamma^{\sind}, \xi^{\sind}) - (\sqd^{\sind}, \sqe^{\sind}) \in \lambda_{\sind}\partial\ell(x^{\sind}(t_{0}), x^{\sind}(t_{1})) - \{(p^{\sind}(t_{0}), -p^{\sind}(t_{1}))\}.
\label{Eq_Dr_19}
\end{align}

As 
\begin{align*}
    \co\partial_{x, u}(p^{\sind}(t)\cdot f(t, x^{\sind}(t), u^{\sind}(t))) \subset p^{\sind}(t) D_{x, u}f(t, x^{\sind}(t), u^{\sind}(t))
\end{align*}
and $D_{x, u}f(t, x^{\sind}(t), u^{\sind}(t)) \subset k_{f}(t)B $, from \eqref{Eq_Dr_13} and \eqref{Eq_Dr_18} and Hypotheses (H8) and (H9), for each $\sind \in \mathbb{N}$,
\begin{align*}
|\dot{p}^{\sind}(t)| &\leq |p^{\sind}(t)|k_{f}(t) + \vert \sqa^{\sind}(t) \vert + |\varphi^{\sind}(t)| \\
&\leq |p^{\sind}(t)|k_{f}(t) + c_{\sqa}(t) + c_{\varphi}(t) \\
& \leq k_{f}(t) + c_{\sqa}(t) + c_{\varphi}(t) ~ \text{a.e.~in} ~ [t_{0}, t_{1}].
\end{align*}
Consequently, $\{\dot{p}^{\sind}\}$ is uniformly integrably bounded. By the Dunford-Pettis Theorem (see, for example, Vinter \cite[Theorem 2.5.1]{Vinter2000}), through the extraction of a subsequence (we do not relabel), there exists $p \in W_{n}^{1,1}$ with $p^{\sind} \to p$ uniformly and $\dot{p}^{\sind} \rightharpoonup \dot{p}$ in $L_{n}^{1}$.

It follows from Hypothesis (H9) that $\{\varphi^\sind\}$ is uniformly integrably bounded. By the Dunford-Pettis Theorem, extracting a subsequence (we do not relabel), we have $\varphi^{\sind} \rightharpoonup \varphi$ for some function $\varphi \in L_{n}^{1}$. 

From (H8), \eqref{Eq_Dr_13} and \eqref{Eq_Dr_18}, we have
\begin{align*}
|\psi^{\sind}(t)| \leq |p^{\sind}(t)|k_{f}(t) + |\sqb^{\sind}(t)|  \leq k_{f}(t) + c_{\sqb}(t) ~ \text{a.e.~in} ~ [t_{0}, t_{1}] ~ \forall \, \sind \in \mathbb{N}.
\end{align*}
Defining $k_{\psi}(t) := k_{f}(t) + c_{\sqb}(t) $ almost everywhere in $[t_{0}, t_{1}]$, we obtain 
\begin{align} \label{Cond_indep}
|\psi^{\sind}(t)| \leq k_{\psi}(t) ~ \text{a.e. in} ~ [t_{0}, t_{1}] ~ \forall\, \sind \in \mathbb{N}.
\end{align}
Consequently, $\{\psi^{\sind}\}$ is uniformly integrably bounded. Thus, by the Dunford-Pettis Theorem, by extracting a subsequence (we do not relabel), we conclude that $\psi^{\sind} \rightharpoonup \psi$ for some function $\psi \in L_{m}^{1}$.

By using \eqref{Eq_Dr_15}, \eqref{Eq_Dr_16}, \eqref{Eq_Revv_2}, \eqref{Def} and \eqref{Cond_indep}, we can conclude that
\begin{align*}
(\varphi^{\sind}, \psi^{\sind}, \gamma^{\sind}, \xi^{\sind}) \in -\mathcal{M}_{\delta}(x^{\sind}, u^{\sind}, \sqc^{\sind}) ~ \forall \, \sind \in \mathbb{N}.
\end{align*}

By (H3), we know that $\ell$ is locally Lipschitz. Then, from \eqref{Eq_Dr_13} and \eqref{Eq_Dr_19} and the fact that $(\sqd^{\sind},\sqe^{\sind}) \to (0,0)$, it follows that the sequence $\{(\gamma^{\sind}, \xi^{\sind})\}$ is bounded. Then, we can take a subsequence (again, we do not relabel) such that $(\gamma^{\sind}, \xi^{\sind}) \to (\gamma, \xi)$ for some $(\gamma, \xi) \in \mathbb{R}^{n}\times\mathbb{R}^{n}$.

Therefore,
\begin{align*}
(\varphi, \psi, \gamma, \xi) \in - \wl\limsup_{\genfrac{}{}{0pt}{}{x \to \bar{x}, u \to \bar{u}}{\sqc \to 0}}\mathcal{M}_{\delta}(x, u, \sqc).
\end{align*}

A direct modification in the proof of the Trajectory Compactness Theorem (see Clarke \cite[Theorem 3.1.7]{Clarke1983}) and an appeal to the upper semicontinuity properties of the normal cones and limit subdifferentials allow us to take limits in relations \eqref{Eq_Dr_18} and \eqref{Eq_Dr_19} to obtain
\begin{align*}
(\varphi(t), \psi(t)) \in \co \partial_{x, u}(p(t)\cdot f(t, \bar{x}(t), \bar{u}(t))) - \{(-\dot{p}(t), 0)\} ~ \text{a.e. in} ~ [t_0,t_1]
\end{align*}
and
\begin{align*}
(\gamma, \xi) \in \lambda\partial\ell(\bar{x}(t_{0}), \bar{x}(t_{1})) - \{(p(t_{0}), -p(t_{1}))\}.
\end{align*}
Furthermore, since $\lambda_{\sind} \to \lambda$ and $p^{\sind} \to p$ uniformly, we conclude from \eqref{Eq_Dr_13} that $\lambda + \Vert p\Vert_{L_{n}^{\infty}} \neq 0$. Therefore,
\begin{align*}
(\varphi, \psi, \gamma, \xi) \in \mathcal{M}(\bar{x}, \bar{u})
\end{align*}
and we conclude the proof.
\end{proof}

Inspired by the results provided by Lemmas \ref{Lema_Rees_WMP} and \ref{Lema_Reesc_AWMP}, we proceed to define AWMP-regularity for (P) as follows.
\begin{definition} 
\label{Def_AWMP-Regular}
	A feasible process $(\bar{x}, \bar{u})$ for Problem (P) is said to be \emph{Asymptotic Weak Maximum Principle regular} (or \emph{AWMP-regular}), if
	\begin{align*}
		\wl\limsup_{\genfrac{}{}{0pt}{}{x \to \bar{x}, u \to \bar{u}}{\sqc \to 0}}\mathcal{M}_{\delta}(x, u, \sqc) \subset \mathcal{M}_{\delta}(\bar{x}, \bar{u}, 0)
	\end{align*}
for some $\delta > 0$.
\end{definition}

In accordance with Definition \ref{Def_AWMP-Regular}, the subsequent results are immediately derivable from Theorem \ref{Teo_AWMP} and Lemmas \ref{Lema_Rees_WMP} and \ref{Lema_Reesc_AWMP}. In essence, these theorems assert two key points: (i) under the condition of AWMP-regularity, the asymptotic version of the weak maximum principle implies the classical one; and (ii) the AWMP-regularity condition serves as a constraint qualification for Problem (P), ensuring the validity of the classical weak maximum principle.

\begin{theorem} \label{AWMP_WMP}
Let $(\bar{x}, \bar{u})$ be an asymptotic extremal of (P). Assume that Hypotheses (H1)-(H9) are satisfied. If $(\bar{x}, \bar{u})$ is an AWMP-regular process of (P), then $(\bar{x}, \bar{u})$ satisfies the conditions of the classical weak maximum principle (Theorem \ref{Teo_PMF_Versao_Class}).
\end{theorem}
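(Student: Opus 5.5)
The plan is to chain the two reformulation lemmas through the regularity inclusion. Since $(\bar{x}, \bar{u})$ is an asymptotic extremal, it comes with an AWMP sequence. Hypotheses (H1)--(H6), (H8) and (H9) are assumed, so Lemma \ref{Lema_Reesc_AWMP} applies directly. It produces an element
\begin{align*}
(\varphi, \psi, \gamma, \xi) \in \mathcal{M}(\bar{x}, \bar{u}) \cap \left(- \wl\limsup_{\genfrac{}{}{0pt}{}{x \to \bar{x}, u \to \bar{u}}{\sqc \to 0}}\mathcal{M}_{\delta}(x, u, \sqc)\right),
\end{align*}
where $\delta > 0$ is the parameter attached to the AWMP sequence.

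Next I invoke Definition \ref{Def_AWMP-Regular}. It gives the inclusion of the weak outer limit into $\mathcal{M}_{\delta}(\bar{x}, \bar{u}, 0)$, so $-(\varphi, \psi, \gamma, \xi) \in \mathcal{M}_{\delta}(\bar{x}, \bar{u}, 0)$. Hence $\mathcal{M}(\bar{x}, \bar{u}) \cap (-\mathcal{M}_{\delta}(\bar{x}, \bar{u}, 0)) \neq \varnothing$. Since $(\bar{x}, \bar{u})$ is feasible (it is an asymptotic extremal, which is feasible by Definition \ref{Def_AWMP-Proces}), the "if" direction of Lemma \ref{Lema_Rees_WMP} then yields the conditions of Theorem \ref{Teo_PMF_Versao_Class}.

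The main point needing care is that the same $\delta$ must be used throughout. The regularity definition only asserts the inclusion for \emph{some} $\delta$. The AWMP sequence, the hypotheses (H1)--(H9), and Lemma \ref{Lema_Rees_WMP} each carry their own $\delta$. I will handle this by taking all of them with the common $\delta$: either by reading the hypotheses as holding for the $\delta$ in the regularity definition, or by shrinking to the minimum.

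Shrinking is harmless for the following reasons. The Lipschitz and differentiability hypotheses persist on smaller neighbourhoods. The normal cone $\mathcal{N}_{U_{\delta}(t)}(u)$ at points $u$ near $\bar{u}(t)$ is unchanged for any $\delta$ by local convexity from (H2), as already used in the proof of Lemma \ref{Lema_Rees_WMP}. Also, $u^{\sind} \to \bar{u}$ in $L^1_m$ lets us pass to a subsequence converging a.e.

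The bound $k_{\psi}$ depends only on $(\bar{x},\bar{u})$ and $\delta$. Its choice in Lemma \ref{Lema_Reesc_AWMP}, namely $k_f + c_{\sqb}$, must be admissible in the definition of $\mathcal{M}_{\delta}$. I will fix $k_{\psi}$ accordingly, so that the limsup set and $\mathcal{M}_{\delta}(\bar{x},\bar{u},0)$ refer to the same family.

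With these consistency checks made, the theorem is a direct composition of Lemma \ref{Lema_Reesc_AWMP}, Definition \ref{Def_AWMP-Regular}, and Lemma \ref{Lema_Rees_WMP}.
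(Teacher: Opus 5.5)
Your proposal is correct and follows the paper's route exactly: the paper derives Theorem \ref{AWMP_WMP} immediately from Lemma \ref{Lema_Reesc_AWMP}, the inclusion in Definition \ref{Def_AWMP-Regular}, and the converse part of Lemma \ref{Lema_Rees_WMP}, tacitly using one common $\delta$ as in your first option. Your second option (shrinking $\delta$) needs one more step, because a.e.\ convergence does not keep $u^{\sind}(t)$ inside the smaller ball for every $t$: replace $u^{\sind}$ by $\bar{u}$ and the multipliers by zero on the set where $|u^{\sind}-\bar{u}|$ exceeds half the smaller radius, whose measure tends to zero, so weak $L^1$ convergence is preserved by uniform integrability.
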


\begin{theorem}
\label{Teo_Nova_CQ}
Let $(\bar{x}, \bar{u})$ be a local weak minimizer for (P). Assume that Hypotheses (H1)-(H9) are satisfied. If $(\bar{x}, \bar{u})$ is an AWMP-regular process of (P), then $(\bar{x}, \bar{u})$ satisfies the conditions of the classical weak maximum principle (Theorem \ref{Teo_PMF_Versao_Class}).
\end{theorem}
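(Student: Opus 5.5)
The plan is to chain Theorem \ref{Teo_AWMP}, Lemma \ref{Lema_Reesc_AWMP}, Definition \ref{Def_AWMP-Regular} and Lemma \ref{Lema_Rees_WMP}, so that Theorem \ref{Teo_Nova_CQ} reduces to Theorem \ref{AWMP_WMP}.

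First, since $(\bar{x}, \bar{u})$ is a weak local minimizer and (H1)--(H7) hold for some $\delta>0$, Theorem \ref{Teo_AWMP} says that $(\bar{x}, \bar{u})$ is an asymptotic extremal. Concretely, there is an AWMP sequence with $x^{\sind}\to\bar{x}$ uniformly and $u^{\sind}\to\bar{u}$ in $L^1_m$. Its residuals $\{(\sqa^{\sind},\sqb^{\sind})\}$ and multipliers $\{(q^{\sind},r^{\sind})\}$ are taken to satisfy (H8) and (H9), which are assumed as part of the hypotheses. Lemma \ref{Lema_Reesc_AWMP} then gives an element
\[
(\varphi,\psi,\gamma,\xi)\in\mathcal{M}(\bar{x},\bar{u})\cap\Bigl(-\wl\limsup_{\genfrac{}{}{0pt}{}{x \to \bar{x}, u \to \bar{u}}{\sqc \to 0}}\mathcal{M}_{\delta}(x,u,\sqc)\Bigr).
\]

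Second, I apply AWMP-regularity, which says the weak outer limit is contained in $\mathcal{M}_{\delta}(\bar{x},\bar{u},0)$. This gives $(\varphi,\psi,\gamma,\xi)\in\mathcal{M}(\bar{x},\bar{u})\cap(-\mathcal{M}_{\delta}(\bar{x},\bar{u},0))$, so the intersection is nonempty. Because $(\bar{x},\bar{u})$ is feasible and (H1)--(H6) hold, the converse direction of Lemma \ref{Lema_Rees_WMP} then shows that the conditions of Theorem \ref{Teo_PMF_Versao_Class} hold at $(\bar{x},\bar{u})$.

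The main obstacle is that the same $\delta$ must be used throughout. Theorem \ref{Teo_AWMP}, the definition of the AWMP sequence, Lemma \ref{Lema_Reesc_AWMP} and Definition \ref{Def_AWMP-Regular} each come with their own $\delta$. I would handle this by shrinking to a single common value, say the minimum of the $\delta$ from the hypotheses and the one from regularity. This requires checking two monotonicity facts. The hypotheses (H1)--(H9) are preserved when $\delta$ decreases. The sets $\mathcal{N}_{U_\delta(t)}(u)$ coincide near $\bar u(t)$ for convex $U_\delta(t)$, since the normal cone is local. Alternatively, I would simply read the statement as fixing one $\delta$ for which all hypotheses and the regularity condition hold.

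A secondary point is that the bound $k_\psi$ in $\mathcal{M}_\delta$ must match the one produced in Lemma \ref{Lema_Reesc_AWMP}, namely $k_f+c_{\sqb}$. This is legitimate because $k_\psi$ depends only on $(\bar{x},\bar{u})$ and $\delta$.
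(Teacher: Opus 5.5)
Your proposal is correct and follows the paper's argument exactly: Theorem~\ref{Teo_AWMP} makes $(\bar{x},\bar{u})$ an asymptotic extremal, Lemma~\ref{Lema_Reesc_AWMP} together with AWMP-regularity gives a point of $\mathcal{M}(\bar{x},\bar{u})\cap(-\mathcal{M}_{\delta}(\bar{x},\bar{u},0))$, and the converse half of Lemma~\ref{Lema_Rees_WMP} then yields the weak maximum principle (the paper calls this immediate). On the $\delta$ bookkeeping, use your second option, a single $\delta$ throughout, as the paper tacitly does; the shrinking argument is not justified as stated, because $u^{\sind}\to\bar{u}$ only in $L^1_m$, so $u^{\sind}(t)$ may lie on or outside the smaller ball on sets of positive measure, where $\mathcal{N}_{U_{\delta}(t)}(u^{\sind}(t))$ does depend on $\delta$, and because (H8)--(H9) refer to one specific AWMP sequence rather than being properties that persist when $\delta$ shrinks.
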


\begin{example}
    In \cite[Example 3.5]{Moreira2024art}, Moreira and de Oliveira explore the fact that the classical weak maximum principle is not satisfied, while the asymptotic weak maximum principle holds. The problem in question is linear and is given by:
	\begin{align*}
		\begin{array}{ll}
			&\text{minimize } \ x(1) \\
			&\text{over } \ x\in W^{1, 1}([0, 1]; \mathbb{R}) \text{ and } u\in L^{1}([0, 1] ; \mathbb{R}^{2}) \text{ satisfying}\\
			&\hspace{0.7 cm}
			\begin{array}{ll}
				\dot{x}(t) = u_{1}(t) \text{ a.e. in } [0, 1], \\
				x(t) + u_{1}(t) + 2u_{2}(t) = 0 \text{ a.e. in } [0, 1], \\
				u_{1}(t) + 2u_{2}(t) = 0 \text{ a.e. in }[0, 1].
			\end{array}
		\end{array}
	\end{align*}
	The only feasible process is $(\bar{x}, (\bar{u}_{1}, \bar{u}_{2})) = (0, (0, 0))$, so that it is optimal. Moreover, due to the linearity of the problem, Hypotheses (H1)-(H7) are satisfied. 
 
    The AWMP sequence considered by Moreira and de Oliveira was given by
    \begin{align*}
        &x^{\sind}(t) = 0, ~ (u_{1}^{\sind}(t), u_{2}^{\sind}(t)) = (0, 0), ~ (\zeta_{1}^{\sind}(t), \zeta_{2}^{\sind}(t)) = (0, 0), ~ \lambda_{\sind} = \frac{1}{2},\\
        &p^{\sind}(t) = \begin{cases}
            0, ~ t \in \left[0, \frac{\sind}{\sind + 1}\right),\\
            -\left(\frac{\sind + 1}{2}\right)t + \frac{\sind}{2}, ~ t \in \left[\frac{\sind}{\sind + 1}, 1\right]
        \end{cases}\hspace{-0.35 cm}, ~(q_{1}^{\sind}(t), q_{2}^{\sind}(t)) = \left(-\dot{p}^{\sind}(t), \dot{p}^{\sind}(t)\right),\\
        &(\sqa^{\sind}(t), (\sqb_{1}^{\sind}(t), \sqb_{2}^{\sind}(t))) = \left(0, \left(-p^{\sind}(t), 0\right)\right) ~\text{and}~ (\sqd^{\sind}, \sqe^{\sind}) = \left(0, 0\right)
    \end{align*}
    almost everywhere in $[0, 1]$ for all $\sind \in \mathbb{N}$. Hypothesis (H8) is clearly satisfied. However, Hypothesis (H9) does not hold. Indeed, note that  
	\begin{align*}
		q_{1}^{\sind}(t)\nabla_{x}b_{1}(t, x^{\sind}(t), (u_{1}^{\sind}(t), u_{2}^{\sind}(t))) + q_{2}^{\sind}(t)\nabla_{x}b_{2}(t, x^{\sind}(t), (u_{1}^{\sind}(t), u_{2}^{\sind}(t))) = q_{1}^{\sind}(t)
	\end{align*} 
	almost everywhere in $[0, 1]$ for each $\sind \in \mathbb{N}$. Since  
	\begin{align*}
		q_{1}^{\sind}(t) = -\dot{p}^{\sind}(t) = 
		\begin{cases}
			0, ~ \text{if } t \in \left[0, \frac{\sind}{\sind + 1}\right),\\
			-\frac{\sind + 1}{2}, ~ \text{if } t \in \left(\frac{\sind}{\sind + 1}, 1\right]
		\end{cases}
	\end{align*}
	almost everywhere in $[0, 1]$ for all $\sind \in \mathbb{N}$, the sequence $\{q_{1}^{\sind}\}$ cannot be bounded almost everywhere in $[0, 1]$ by an integrable function independent of $\sind \in \mathbb{N}$. Consequently, Hypothesis (H8) is not satisfied.

As shown in Andreani et al. \cite[Example 4.1]{Andreani2020}, the conditions of the classical weak maximum principle do not hold for this problem. Therefore, it was expected that some of the hypotheses of Theorem \ref{Teo_Nova_CQ} would fail.
\end{example}

Subsequently, it will be demonstrated that the AWMP-regularity condition is the weakest condition under which the asymptotic weak maximum principle implies the classical weak maximum principle. The meaning of ``weakest condition'' should be understood in the sense of Theorem  \ref{Lem_Cond_mais_fraca}, presented below. Additionally, it is important to highlight that the analysis is limited to the smooth case. The nonsmooth case poses substantial technical challenges and, consequently, will be explored in future research.

Prior to the presentation of the statement of Theorem \ref{Lem_Cond_mais_fraca}, it is necessary to consider the following relation:
\begin{itemize} 
    \item[(R)] $(\bar{x}, \bar{u})$ is an asymptotic extremal for (P) with the sequence ${(\lambda_{\sind}, p^{\sind})} \subset [0, +\infty)$ $\times W_{n}^{1,1}$ obeying $\lambda_{\sind} \to \lambda$ and $p^{\sind} \to p$ uniformly $\Rightarrow$ $(\bar{x}, \bar{u})$ is a extremal process with the multipliers $(\lambda, p) \in [0, +\infty) \times W_{n}^{1,1}$. 
\end{itemize}

\begin{theorem} \label{Lem_Cond_mais_fraca}
Let $(\bar{x},\bar{u}) \in W_{n}^{1,1} \times L^1_m$ in which (H2)--(H6) are valid for some $\delta > 0$. Assume that 
\begin{align*}
& 0 = b(t,\bar{x}(t),\bar{u}(t)) ~ \text{a.e. in} ~ [t_0,t_1], \\
& 0 \geq g(t,\bar{x}(t),\bar{u}(t)) ~ \text{a.e. in} ~ [t_0,t_1], \\ 
& \bar{u}(t) \in  U(t) ~ \text{a.e. in} ~ [t_0,t_1], \\ 
& (\bar{x}(S),\bar{x}(T)) \in C. 
\end{align*}
If for each pair of continuously differentiable functions $\ell(\cdot,\cdot)$ and $f(t,\cdot,\cdot)$ the relation (R) is verified, then $(\bar{x},\bar{u})$ is an AWMP-regular process.
\end{theorem}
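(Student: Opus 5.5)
The plan is the standard ``weakest CQ'' construction used for CCP and AKKT-regularity. We start from an arbitrary element $(\varphi,\psi,\gamma,\xi)$ of the weak outer limit $\wl\limsup\mathcal{M}_{\delta}(x,u,\sqc)$ and build linear data $\ell$ and $f$, tailored to this element, for which $(\bar{x},\bar{u})$ is an asymptotic extremal with constant multipliers $(\lambda_\sind,p^\sind)$. Relation (R) then makes $(\bar{x},\bar{u})$ an extremal with these same $(\lambda,p)$. Reading off the classical conditions, as in the proof of Lemma \ref{Lema_Rees_WMP}, will show that $(\varphi,\psi,\gamma,\xi)\in\mathcal{M}_{\delta}(\bar{x},\bar{u},0)$.

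Concretely, fix sequences $x^\sind\to\bar{x}$ uniformly, $u^\sind\to\bar{u}$ in $L^1_m$, $\sqc^\sind\to 0$ a.e., and $(\varphi^\sind,\psi^\sind,\gamma^\sind,\xi^\sind)\in\mathcal{M}_{\delta}(x^\sind,u^\sind,\sqc^\sind)$ with $(\varphi^\sind,\psi^\sind)\rightharpoonup(\varphi,\psi)$ and $(\gamma^\sind,\xi^\sind)\to(\gamma,\xi)$. Let $q^\sind,r^\sind,\zeta^\sind$ be the associated multipliers. Let $e_1$ be the first unit vector of $\mathbb{R}^n$, and take $\lambda_\sind:=\tfrac12$ and $p^\sind\equiv p:=\tfrac12 e_1$, so that $\lambda_\sind+\Vert p^\sind\Vert_{L^\infty_n}=1$ and $\dot p^\sind=0$. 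Define
\begin{align*}
f(t,x,u):=-2\big(\varphi(t)\cdot x+\psi(t)\cdot u\big)e_1,\qquad \ell(x_0,x_1):=2\big(p-\gamma\big)\cdot x_0+2\big(-p-\xi\big)\cdot x_1 .
\end{align*}
Both are continuously differentiable in $(x,u)$ and in $(x_0,x_1)$, and $f$ satisfies (H1) with $k_f:=2(|\varphi|+|\psi|)\in L^1$. We also have $p\cdot f=-(\varphi\cdot x+\psi\cdot u)$, so $\partial_{x,u}(p\cdot f)=\{(-\varphi(t),-\psi(t))\}$. Put $\sqa^\sind:=\varphi-\varphi^\sind$ and $\sqb^\sind:=\psi-\psi^\sind$, both of which converge weakly to $0$. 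Using the representation $(\varphi^\sind,\psi^\sind)=q^\sind\nabla b+r^\sind\nabla g-(0,\zeta^\sind)$, the inclusion \ref{iiSeqAWMP} of Definition \ref{SeqAWMP} becomes an identity. Items \ref{iiiSeqAWMP} and \ref{ivSeqAWMP} are inherited from membership in $\mathcal{M}_\delta$. For \ref{vSeqAWMP}, take the normal vector $(\gamma^\sind,\xi^\sind)\in\mathcal{N}_C(x^\sind(t_0),x^\sind(t_1))$ and $(\sqd^\sind,\sqe^\sind):=(\gamma,\xi)-(\gamma^\sind,\xi^\sind)\to 0$. Since $\lambda_\sind\nabla\ell=(p,-p)-(\gamma,\xi)$, inclusion \ref{vSeqAWMP} again holds exactly. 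Hence $(\bar{x},\bar{u})$ is an asymptotic extremal with constant $(\lambda_\sind,p^\sind)$.

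By (R), $(\bar{x},\bar{u})$ is an extremal with multipliers $(\tfrac12,p)$, giving some $q,r,\zeta$. The Euler inclusion reads $(0,\zeta)=(-\varphi,-\psi)+q\nabla_{x,u}b+r\nabla_{x,u}g$, that is, $(\varphi,\psi)=q\nabla b+r\nabla g-(0,\zeta)$. We have $\zeta\in\co\mathcal{N}_{U}(\bar{u})=\mathcal{N}_{U_\delta}(\bar{u})$ by (H2), and $r\cdot g=0$, $r\le0$ together with feasibility give $r_jg_j^-=0$. The transversality condition $(p,-p)\in\tfrac12\nabla\ell+\mathcal{N}_C$ yields $(\gamma,\xi)\in\mathcal{N}_C(\bar{x}(t_0),\bar{x}(t_1))$. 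Finally, $|\psi|\le k_\psi$ a.e. follows because $\{\phi:|\phi|\le k_\psi\}$ is convex and closed in $L^1_m$, hence weakly closed, and it contains every $\psi^\sind$. Together these give $(\varphi,\psi,\gamma,\xi)\in\mathcal{M}_\delta(\bar{x},\bar{u},0)$, which is AWMP-regularity.

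The main obstacle is bookkeeping with the signs and conventions of $\mathcal{M}_\delta$ versus $-\mathcal{M}_\delta$, together with checking that the constructed $f$ is admissible in (R). The latter requires $f$ to be merely measurable in $t$ while continuously differentiable in $(x,u)$, and (R) must be read as allowing such data. If (R) were instead meant for data smooth in $t$, I would first approximate $\varphi,\psi$ by smooth functions and absorb the errors into $\sqa^\sind,\sqb^\sind$.
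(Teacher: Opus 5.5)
Your proposal is correct and is essentially the paper's proof. It uses the same linear data $\ell,f$ with $\nabla_{x,u}(p\cdot f)=(-\varphi,-\psi)$, the same constant multipliers $\lambda_\sind=\tfrac12$, $p^\sind\equiv p$, the same choices $(\sqa^\sind,\sqb^\sind)=(\varphi-\varphi^\sind,\psi-\psi^\sind)$ and $(\sqd^\sind,\sqe^\sind)=(\gamma-\gamma^\sind,\xi-\xi^\sind)$, and then reads off $\mathcal{M}_\delta(\bar x,\bar u,0)$ from (R) (your rank-one $f$ with $p=\tfrac12 e_1$ replaces the paper's diagonal $A(t)$, $B(t)$ with $p=\tfrac12\mathbf{1}_{n\times 1}$, and it actually gives $\lambda+\Vert p\Vert_{L^\infty_n}=1$ under the Euclidean norm, which the paper's choice satisfies only when $n=1$). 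One detail that both you and the paper skip is that an asymptotic extremal must be a process, so $\dot{\bar x}=f(t,\bar x,\bar u)$ is needed for the auxiliary problem; this is arranged by adding to $f$ the drift $h(t):=\dot{\bar x}(t)+2\big(\varphi(t)\cdot\bar x(t)+\psi(t)\cdot\bar u(t)\big)e_1$, which is integrable since $\bar u\in L^\infty_m$ and leaves $\nabla_{x,u}f$, and hence the rest of the argument, unchanged.
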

\begin{proof} 
	Let $\delta > 0$ be such that Hypotheses (H2)--(H6) are satisfied. Consider
	\begin{align*}
		(\varphi, \psi, \gamma, \xi) \in \wl\limsup_{\genfrac{}{}{0pt}{}{x \to \bar{x}, u \to  \bar{u}}{\sqc \to 0}}\mathcal{M}_{\delta}(x, u, \sqc).
	\end{align*}
	By definition, there exist $\{(x^{\sind}, u^{\sind}, \sqc^{\sind})\} \subset W_{n}^{1,1} \times L_{m}^{1} \times \mathscr{C}$ and $\{(\varphi^{\sind}, \psi^{\sind}, \gamma^{\sind}, \xi^{\sind})\} \subset L_{n}^{1} \times L_{m}^{1} \times \mathbb{R}^{n} \times \mathbb{R}^{n}$ such that $x^{\sind} \to \bar{x}$ uniformly, $u \to \bar{u}$ in $L_{m}^{1}$, $\sqc_{j}^{\sind}(t) \to 0$ almost everywhere in $[t_{0}, t_{1}]$ for $j = 1, \dots, m_{g}$, $(\varphi^{\sind}, \psi^{\sind}) \rightharpoonup (\varphi, \psi)$, $(\gamma^{\sind}, \xi^{\sind}) \to (\gamma, \xi)$, and
	\begin{align}
		(\varphi^{\sind}, \psi^{\sind}, \gamma^{\sind}, \xi^{\sind}) \in \mathcal{M}_{\delta}(x^{\sind}, u^{\sind}, \sqc^{\sind}) ~  \forall \, \sind \in \mathbb{N}.
	\label{Eq_Dr_21}
	\end{align}
From \eqref{Eq_Dr_21}, we guarantee the existence of $k_{\psi} \in L_{m}^{1}$ and sequences $\{(q^{\sind}, r^{\sind})\} \subset L_{m_{b}}^{1} \times L_{m_{g}}^{1}$ and $\{\zeta^{\sind}\} \subset L_{m}^{1}$ such that, 
\begin{align*}
& (\varphi^{\sind}(t), \psi^{\sind}(t)) = \sum_{i=1}^{m_{b}}q_{i}^{\sind}(t)\nabla_{x, u}b_{i}(t, x^{\sind}(t), u^{\sind}(t)) + \sum_{j=1}^{m_{g}}r_{j}^{\sind}(t)\nabla_{x, u}g_{j}(t, x^{\sind}(t), u^{\sind}(t)) \\
&\hspace{2.4cm}+ (0, -\zeta^{\sind}(t)) \\
& \zeta^{\sind}(t) \in \mathcal{N}_{U_{\delta}(t)}(u^{\sind}(t)), \\
& r_{j}^{\sind}(t)g_{j}^{-}(t, x^{\sind}(t), u^{\sind}(t)) = \sqc_{j}^{\sind}(t), ~ r_{j}^{\sind}(t) \leq 0, \\
& \vert\psi^{\sind}(t)\vert \leq k_{\psi}(t)
\end{align*}
almost everywhere in $[t_{0}, t_{1}]$ for all $\sind \in \mathbb{N}$. Moreover, we know that $\{(\gamma^{\sind}, \xi^{\sind})\} \subset \mathcal{N}_{C}(x^{\sind}(t_{0}), x^{\sind}(t_{1}))$.
	
By the definitions of $\mathcal{M}_{\delta}$ and the limiting normal cone, we have $u^{\sind}(t) \in U_{\delta}(t)$ almost everywhere in $[t_{0}, t_{1}]$ and $(x^{\sind}(t_{0}), x^{\sind}(t_{1})) \in C$ for each $\sind \in \mathbb{N}$, because otherwise $\mathcal{M}_{\delta}(x^{\sind}, u^{\sind}, \sqc^{\sind}) = \varnothing$ for each $\sind \in \mathbb{N}$, which would contradict \eqref{Eq_Dr_21}.
	
We define, for almost every $t \in [t_{0}, t_{1}]$, the following matrices:
	\begin{align*}
		A(t) := 
		\left[
		\begin{array}{cccc}
			\varphi_{1}(t) & 0 & \cdots & 0\\
			0 & \varphi_{2}(t) & \cdots & 0\\
			\vdots & \vdots & \ddots & \vdots\\
			0 & 0 & \cdots & \varphi_{n}(t)
		\end{array}
		\right]_{n \times n}
	\end{align*}
    and 
    \begin{align*}
        B(t) := 
		\left[
		\begin{array}{cccc}
			\psi_{1}(t) & \psi_{2}(t) & \cdots & \psi_{m}(t)\\
			0 & 0 & \cdots & 0\\
			\vdots & \vdots & \ddots & \vdots\\
			0 & 0 & \cdots & 0
		\end{array}
		\right]_{n \times m}.
    \end{align*}
We also consider the following auxiliary optimal control problem:
$$
\begin{array}{rl}
\text{minimize} & \ell(x(t_{0}), x(t_{1})) \\
\text{over} & (x, u)\in W_{n}^{1, 1}\times L_{m}^{1} \text{ satisfying} \\
& \dot{x}(t) = f(t, x(t), u(t)) ~ \text{a.e. in} ~ [t_{0}, t_{1}], \\
& 0 = b(t, x(t), u(t)) ~ \text{a.e. in} ~ [t_{0}, t_{1}], \\
& 0 \geq g(t, x(t), u(t)) ~ \text{a.e. in} ~ [t_{0}, t_{1}], \\
& u(t) \in U(t) ~ \text{a.e. in} ~ [t_{0}, t_{1}], \\
& (x(t_{0}), x(t_{1})) \in C,
\end{array} 
\eqno{\text{(ACP)}}
$$
where
\begin{align}
& \ell(x,y) := (-2\gamma + \mathbf{1}_{n\times 1}) \cdot x + (-2\xi - \mathbf{1}_{n\times 1}) \cdot y,
\label{Eq_Dr_24} \\
& f(t, x, u) := -2A(t)x - 2B(t)u ~ \text{a.e. in} ~ [t_{0}, t_{1}]. \label{Eq_Dr_25}
\end{align}

For all $\sind \in \mathbb{N}$ and $t \in [t_{0}, t_{1}]$, we set
\begin{align*}
    \lambda_{\sind} := \dfrac{1}{2} \quad\text{ and }\quad p^{\sind}(t) := \left(\mathbf{\dfrac{1}{2}}\right)_{n\times 1}.
\end{align*}
Thus, $(\lambda_{\sind}, p^{\sind}) \in [0, +\infty)\times W_{n}^{1,1}$, $\lambda_{\sind} + \Vert p^{\sind}\Vert_{L_{n}^{\infty}} = 1$ and $\dot{p}^{\sind}(t) \equiv 0$ for each $\sind \in \mathbb{N}$. Moreover, $\lambda_{\sind} \to 1/2 =: \lambda$ and $(p^{\sind}, \dot{p}^{\sind}) \to (p, \dot{p}) = (1/2, 0)$ uniformly. For every $\sind \in \mathbb{N}$ and almost every $t \in [t_{0}, t_{1}]$, we also set
\begin{align}
    (\sqa^{\sind}(t), \sqb^{\sind}(t)) &:= -\nabla_{x, u}f(t, x^{\sind}(t), u^{\sind}(t))^{T}p^{\sind}(t) + (-\dot{p}^{\sind}(t), 0) - (\varphi^{\sind}(t), \psi^{\sind}(t)) 
\label{Eq_Dr_22}
\end{align}
and
\begin{align}
    (\sqd^{\sind}, \sqe^{\sind}) &:= -\lambda_{\sind}\nabla\ell(x^{\sind}(t_{0}), x^{\sind}(t_{1})) + (p^{\sind}(t_0), -p^{\sind}(t_1)) - (\gamma^{\sind}, \xi^{\sind}).
\label{Eq_Dr_23}
\end{align}
From \eqref{Eq_Dr_24}--\eqref{Eq_Dr_23}, $(\sqa^{\sind}(t), \sqb^{\sind}(t)) = (\varphi(t), \psi(t)) - (\varphi^{\sind}(t), \psi^{\sind}(t))$ almost everywhere in $[t_{0}, t_{1}]$ and $(\sqd^{\sind}, \sqe^{\sind}) = (\gamma, \xi) - (\gamma^{\sind}, \xi^{\sind})$ for each $\sind \in \mathbb{N}$. Hence, $(\sqa^{\sind},\sqb^{\sind}) \rightharpoonup (0,0)$ in $L_{n}^{1} \times L_{m}^{1}$ and $(\sqd^{\sind},\sqe^{\sind}) \to (0,0)$. Consequently, $(\bar{x}, \bar{u})$ is an asymptotic extremal for (ACP). Therefore, by hypothesis, $(\bar{x}, \bar{u})$ is an extremal process of (ACP) with the multipliers $(\lambda, p(t)) \equiv (1/2, 1/2)$, that is, there exist $\zeta \in L_{m}^{1}$ and $(q, r) \in L_{m_{b}}^{1}\times L_{m_{g}}^{1}$ such that
\begin{align}
\nonumber
    &\lambda + \Vert p\Vert_{L_{n}^{\infty}} \neq 0,\\
    &(-\dot{p} (t), \zeta(t)) = \nabla_{x, u} \mathcal{H}(t, \bar{x}(t), p(t), q(t), r(t), \bar{u}(t)) ~ \text{a.e. in} ~ [t_0,t_1],
\label{Eq_Dr_26}\\
    &\zeta(t) \in \co\mathcal{N}_{U(t)}(\bar{u}(t)) ~ \text{a.e. in} ~ [t_0,t_1],
\label{Eq_Dr_30}\\
\nonumber
    &r(t)\cdot g(t, \bar{x}(t), \bar{u}(t)) = 0 ~ \text{and} ~ r(t) \leq 0 ~ \text{a.e. in} ~ [t_0,t_1], \\
    &(p(t_{0}), -p(t_{1})) \in \lambda \nabla \ell(\bar{x}(t_{0}), \bar{x}(t_{1})) + \mathcal{N}_{C}(\bar{x}(t_{0}), \bar{x}(t_{1})).
\label{Eq_Dr_27}
\end{align}
Since, by assumption, $0 \geq g(t,\bar{x}(t),\bar{u}(t))$ almost everywhere in $[t_0,t_1]$, 
\begin{align*}
r_{j}(t)g_{j}^{-}(t, \bar{x}(t), \bar{u}(t)) = r_{j}(t)(-g_{j}(t, \bar{x}(t), \bar{u}(t))) = 0 ~ \text{a.e. in} ~ [t_0,t_1], ~ j = 1, \dots, m_{g}.
\end{align*}
By \eqref{Eq_Dr_26}, for almost everywhere in $[t_0,t_1]$,
\begin{multline} \label{Eq_Dr_28}
- \nabla_{x, u}f(t, \bar{x}(t), \bar{u}(t))^{T}p(t) + (-\dot{p}(t), 0) \\ = \sum_{i=1}^{m_{b}}q_{i}(t)\nabla_{x, u}b_{i}(t, \bar{x}(t), \bar{u}(t)) + \sum_{j=1}^{m_{g}}r_{j}(t)\nabla_{x, u}g_{j}(t, \bar{x}(t), \bar{u}(t)) + (0, -\zeta(t)).
\end{multline}
From \eqref{Eq_Dr_27}, 
\begin{align}
-\lambda \nabla \ell(\bar{x}(t_{0}), \bar{x}(t_{1})) + (p(t_{0}), -p(t_{1})) \in \mathcal{N}_{C}(\bar{x}(t_{0}), \bar{x}(t_{1})).
\label{Eq_Dr_29}
\end{align}
By \eqref{Eq_Dr_22} and \eqref{Eq_Dr_23}, from the convergences of $\{(\sqa^{\sind},\sqb^{\sind})\}$, $\{(\varphi^\sind,\psi^\sind)\}$, $\{(\sqd^{\sind}, \sqe^{\sind})\}$, $\{(\gamma^\sind,\xi^\sind)\}$, $\{x^{\sind}\}$, $\{u^{\sind}\}$, $\{p^{\sind}\}$, $\{\dot{p}^{\sind}\}$, and $\{\lambda_{\sind}\}$, from the continuity of $\ell$ and $f$, and from the uniqueness of the limits, we have
\begin{equation} \label{eq1}
- \nabla_{x, u}f(t, \bar{x}(t), \bar{u}(t))^{T}p(t) + (-\dot{p}(t), 0) - (\varphi(t),\psi(t)) = (0,0) ~ \text{a.e. in} ~ [t_0,t_1]
\end{equation}
and
\begin{equation} \label{eq2}
-\lambda \nabla \ell(\bar{x}(t_{0}), \bar{x}(t_{1})) + (p(t_{0}), -p(t_{1})) - (\gamma,\xi) = (0,0).
\end{equation}
From \eqref{Eq_Dr_28}, \eqref{Eq_Dr_29}, \eqref{eq1} and \eqref{eq2}, we obtain
\begin{align*}
(\varphi(t), \psi(t)) &= \sum_{i=1}^{m_{b}}q_{i}(t)\nabla_{x, u}b_{i}(t, \bar{x}(t), \bar{u}(t)) + \sum_{j=1}^{m_{g}}r_{j}(t)\nabla_{x, u}g_{j}(t, \bar{x}(t), \bar{u}(t)) \\
&\quad + (0, -\zeta(t)) ~ \text{a.e. in} ~ [t_0,t_1]
\end{align*}
and 
$$
(\gamma,\xi) \in \mathcal{N}_{C}(\bar{x}(t_{0}), \bar{x}(t_{1})).
$$
From $\vert\psi^{\sind}(t)\vert \leq k_{\psi}(t)$ almost everywhere in $[t_0,t_1]$ and the convergence of $\{\psi^{\sind}\}$, we obtain 
$$
\vert\psi(t)\vert \leq k_{\psi}(t) ~ \text{a.e. in} ~ [t_0,t_1]. 
$$
Since by \eqref{Eq_Dr_30}, $\zeta(t) \in \co\mathcal{N}_{U(t)}(\bar{u}(t)) = \mathcal{N}_{U_{\delta}(t)}(\bar{u}(t))$ almost everywhere in $[t_{0}, t_{1}]$, it follows that
\begin{align*}
(\varphi, \psi, \gamma, \xi) \in \mathcal{M}_{\delta}(\bar{x}, \bar{u}, 0)
\end{align*}
and, therefore, $(\bar{x}, \bar{u})$ is an AWMP-regular process.    
\end{proof}

\section{Sufficiency Criteria for AWMP-Regularity} \label{Suff_Crit}

In this section, we propose some criteria for the sufficiency of AWMP regularity. The initial set of criteria is organized in Theorem \ref{Lema_Cond_Sufi_Reg_AWMP}, which is presented subsequently. However, it is imperative to acknowledge the following remark prior to the subsequent statement.

\begin{remark}
\label{Obs_Rev_1}
Remember that, given any sequences $\{(x^{\sind}, u^{\sind}, \sqc^{\sind})\} \subset W_{n}^{1,1} \times L_{m}^{1} \times \mathscr{C}$ and $\{(\varphi^{\sind}, \psi^{\sind}, \gamma^{\sind}, \xi^{\sind})\} \subset L_{n}^{1} \times L_{m}^{1} \times \mathbb{R}^{n} \times \mathbb{R}^{n}$ with $(\varphi^{\sind}, \psi^{\sind}, \gamma^{\sind}, \xi^{\sind}) \in \mathcal{M}_{\delta}(x^{\sind}, u^{\sind}, \sqc^{\sind})$ for all $\sind \in \mathbb{N}$, by the definition of $\mathcal{M}_{\delta}(x^{\sind}, u^{\sind}, \sqc^{\sind})$, there exist $k_{\psi} \in L_{m}^{1}$ and sequences $\{(q^{\sind}, r^{\sind})\} \subset L_{m_{b}}^{1} \times L_{m_{g}}^{1}$ and $\{\zeta^{\sind}\} \subset L_{m}^{1}$ such that, for each $\sind \in \mathbb{N}$,
\begin{align}
& (\varphi^{\sind}(t), \psi^{\sind}(t)) = \sum_{i=1}^{m_{b}} q_{i}^{\sind}(t) \nabla_{x, u} b_{i}(t, x^{\sind}(t), u^{\sind}(t))  \label{cond1} \\
& \qquad\qquad\qquad\quad + \sum_{j=1}^{m_{g}} r_{j}^{\sind}(t) \nabla_{x, u} g_{j}(t, x^{\sind}(t), u^{\sind}(t))  \nonumber \\
& \qquad\qquad\qquad\quad + (0, -\zeta^{\sind}(t)) ~ \text{a.e.~in} ~ [t_0,t_1], \nonumber \\
& \zeta^{\sind}(t) \in \mathcal{N}_{U_{\delta}(t)}(u^{\sind}(t)) ~ \text{a.e.~in} ~ [t_0,t_1], \label{cond2} \\ 
& r_{j}^{\sind}(t)g_{j}^{-}(t, x^{\sind}(t), u^{\sind}(t)) = \sqc_{j}^{\sind}(t), ~ r_{j}^{\sind}(t) \leq 0 ~ \text{a.e.~in} ~ [t_0,t_1], ~ j=1,\ldots,m_g, \label{cond3} \\
& (\gamma^{\sind}, \xi^{\sind}) \subset \mathcal{N}_{C}(x^{\sind}(t_{0}), x^{\sind}(t_{1})), \label{cond4} \\
& |\psi^{\sind}(t)| \leq k_{\psi}(t) ~ \text{a.e.~in} ~ [t_0,t_1]. \label{cond5} 
\end{align}

In general, the sequences $\{(q^{\sind}, r^{\sind})\} \subset L_{m_{b}}^{1} \times L_{m_{g}}^{1}$ and $\{\zeta^{\sind}\} \subset L_{m}^{1}$ may be unbounded even if $\{(x^{\sind}, u^{\sind}, \sqc^{\sind})\}$ and $\{(\varphi^{\sind}, \psi^{\sind}, \gamma^{\sind}, \xi^{\sind})\}$ converge.
\end{remark}

The following hypothesis will be employed in this section. Let $(\bar{x}, \bar{u})$ be a reference process of (P) and $\delta > 0$.
\begin{itemize}
\item[(H10)] There exists an integrable function $\tilde{c}_{bg}$ such that 
$$
|\nabla_{u}(b,g)(t,x,u)| \leq \tilde{c}_{bg}(t) ~ \forall \, (x,u) \in \Omega_{\delta}(t) ~ \text{a.e.~in} ~ [t_{0}, t_{1}].
$$
\end{itemize}

\begin{theorem} 
\label{Lema_Cond_Sufi_Reg_AWMP}
    Let $(\bar{x}, \bar{u})$ be a feasible process for (P). Suppose that given $\delta > 0$ and  
    \begin{align*}
        (\varphi,\psi,\gamma,\xi) \in \wl\limsup_{\genfrac{}{}{0pt}{}{x \to \bar{x}, u \to \bar{u}}{\sqc \to 0}}\mathcal{M}_{\delta}(x, u, \sqc),
    \end{align*}
    the hypotheses (H3)-(H6) and (H10) are valid and there exist integrable functions $k_{q}$ and $k_{r}$ such that 
    \begin{align} \label{assumption}
    |q^{\sind}(t)| \leq k_{q}(t) \quad \text{and} \quad |r^{\sind}(t)| \leq k_{r}(t) ~ \text{a.e.~in} ~ [t_{0}, t_{1}] ~\forall \, \sind \in \mathbb{N},
    \end{align}
    where $\{(q^{\sind}, r^{\sind})\} \subset L_{m_{b}}^{1}\times L_{m_{g}}^{1}$ is the sequence referred to in Remark \ref{Obs_Rev_1}. Then, $(\bar{x}, \bar{u})$ is an AWMP-regular process.
\end{theorem}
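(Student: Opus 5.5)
We fix $(\varphi,\psi,\gamma,\xi)$ in the weak outer limit and show that it belongs to $\mathcal{M}_{\delta}(\bar{x},\bar{u},0)$. We take the defining sequences $\{(x^{\sind},u^{\sind},\sqc^{\sind})\}$ and $\{(\varphi^{\sind},\psi^{\sind},\gamma^{\sind},\xi^{\sind})\}$, together with the multipliers $\{(q^{\sind},r^{\sind},\zeta^{\sind})\}$ and the bound $k_{\psi}$ of Remark \ref{Obs_Rev_1}, so that \eqref{cond1}--\eqref{cond5} hold. Passing to a subsequence, we may assume $x^{\sind}(t)\to\bar{x}(t)$ for all $t$ and $u^{\sind}(t)\to\bar{u}(t)$ a.e. (from $L^1$ convergence). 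As in the proof of Theorem \ref{Lem_Cond_mais_fraca}, $u^{\sind}(t)\in U_{\delta}(t)$ a.e. and $(x^{\sind}(t_0),x^{\sind}(t_1))\in C$. Hence $(x^{\sind}(t),u^{\sind}(t))\in\Omega_{\delta}(t)$ a.e., so (H5) and (H10) apply along the sequence.

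\textbf{Limits of the multipliers.} By \eqref{assumption}, $\{q^{\sind}\}$ and $\{r^{\sind}\}$ are dominated by the integrable functions $k_q$ and $k_r$. By Dunford--Pettis we extract $q^{\sind}\rightharpoonup q$ in $L^1_{m_b}$ and $r^{\sind}\rightharpoonup r$ in $L^1_{m_g}$. Since $\mathscr{C}$-type sign sets are closed and convex, hence weakly closed, we get $r\le 0$ a.e. The gradients $\nabla_{x,u}(b,g)(t,x^{\sind}(t),u^{\sind}(t))$ are bounded by $k_{bg}(t)$ by (H5) and converge a.e. to $\nabla_{x,u}(b,g)(t,\bar{x}(t),\bar{u}(t))$ by continuous differentiability.

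The products $q_i^{\sind}\nabla b_i(\cdot,x^{\sind},u^{\sind})$ are handled by a standard lemma: if $a^{\sind}\rightharpoonup a$ in $L^1$ with $|a^{\sind}|\le k$ integrable, and $M^{\sind}\to M$ a.e. with uniformly bounded $M^{\sind}$, then $a^{\sind}M^{\sind}\rightharpoonup aM$ in $L^1$. This follows from Egorov's theorem plus equi-integrability. The difficulty is that $k_{bg}$ need not be bounded. To get around this, we test against $L^\infty$ functions and truncate on the sets $\{k_{bg}\le N\}$, using domination by $k_q k_{bg}$. That product may fail to be integrable, and this is \emph{the main obstacle}. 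I would first try to bound the $u$-part using (H10), where $\tilde c_{bg}$ enters, and argue that each $\varphi^{\sind}$ is integrable as given. If this fails, I would work instead on sets $E_N=\{k_q+k_r+k_{bg}+\tilde c_{bg}\le N\}$, where all the convergences hold in the weak sense. Identities valid a.e. on every $E_N$ then hold a.e. on $[t_0,t_1]$.

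On each $E_N$, the right-hand side of \eqref{cond1} without the $\zeta$-term converges weakly to $\sum q_i\nabla b_i+\sum r_j\nabla g_j$ at $(\bar{x},\bar{u})$. Since $(\varphi^{\sind},\psi^{\sind})\rightharpoonup(\varphi,\psi)$, we deduce $\zeta^{\sind}\rightharpoonup\zeta$ on $E_N$. Here $\zeta$ is defined by
$(0,-\zeta)=(\varphi,\psi)-\sum q_i\nabla_{x,u}b_i-\sum r_j\nabla_{x,u}g_j$. As a byproduct, the $x$-component yields exactly the $\varphi$ identity. For the normal cone, $U_\delta(t)$ is closed and convex by (H2), so $\zeta^{\sind}(t)\in\mathcal{N}_{U_\delta(t)}(u^{\sind}(t))$ means $\zeta^{\sind}(t)\cdot(v-u^{\sind}(t))\le 0$ for all $v\in U_\delta(t)$. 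We test with measurable selections $v(\cdot)$ and nonnegative $L^\infty$ weights, pass to the limit (weak times a.e.-convergent bounded), and use a countable dense family of selections (Castaing representation from (H2)). This gives $\zeta(t)\in\mathcal{N}_{U_\delta(t)}(\bar{u}(t))$ a.e.

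\textbf{Remaining conditions.} Complementarity follows from $r_j^{\sind}g_j^-(\cdot,x^{\sind},u^{\sind})=\sqc_j^{\sind}\to0$ a.e. Indeed, $g_j^-(\cdot,x^{\sind},u^{\sind})\to g_j^-(\cdot,\bar{x},\bar{u})$ a.e. and boundedly on $E_N$, so $r_jg_j^-(\bar{x},\bar{u})$ is the weak limit of $r_j^{\sind}g_j^-$. By dominated convergence, $\sqc_j^{\sind}$, dominated by $k_r\,|g_j^-|$, tends to $0$ weakly, so $r_jg_j^-(\cdot,\bar{x},\bar{u})=0$ a.e. The bound $|\psi|\le k_\psi$ passes to weak limits, since $\{\psi:|\psi|\le k_\psi\}$ is convex and closed. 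Finally, $(\gamma,\xi)\in\mathcal{N}_C(\bar{x}(t_0),\bar{x}(t_1))$ by the closedness of the graph of the limiting normal cone, since $(\gamma^{\sind},\xi^{\sind})\to(\gamma,\xi)$ and the endpoints converge in $C$ (closed by (H3)). These facts give $(\varphi,\psi,\gamma,\xi)\in\mathcal{M}_\delta(\bar{x},\bar{u},0)$, i.e. AWMP-regularity.
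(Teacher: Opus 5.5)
Your argument is correct in substance, but it passes to the limit by a different mechanism than the paper.

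\textbf{The paper's route.} The paper handles the possibly unbounded multipliers by a global reweighting. It divides $q^{\sind}$, $r^{\sind}$, $\zeta^{\sind}$ and the identity \eqref{cond1} by $1+k_q+k_r$, so that $\tilde q^{\sind},\tilde r^{\sind}$ are bounded by $1$ and have weak$^\star$ limits in $L^\infty$. It then uses (H10) to dominate $\tilde\zeta^{\sind}$ by $k_\psi/(1+k_q+k_r)+2\tilde c_{bg}$, which gives $\tilde\zeta^{\sind}\rightharpoonup\tilde\zeta$ by Dunford--Pettis. Each product is split into $\tilde q_i^{\sind}[\nabla b_i(t,x^{\sind},u^{\sind})-\nabla b_i(t,\bar x,\bar u)]$, handled by dominated convergence, and $\tilde q_i^{\sind}\nabla b_i(t,\bar x,\bar u)$, handled by weak$^\star$ convergence. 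Finally the limits are multiplied back by $1+k_q+k_r$.

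\textbf{Your route and what it buys.} Your exhaustion by the sets $E_N$, with weak $L^1$ limits of $q^{\sind},r^{\sind}$ taken directly from the domination by $k_q,k_r$, is the local counterpart of that reweighting. It has two gains.
\begin{itemize}
\item The weak limit of $\zeta^{\sind}$ on each $E_N$ is read off from the identity itself. So (H10) plays no real role: the Lipschitz bound $k_{bg}$ in (H5) already controls $\nabla_u(b,g)$.
\item The conditions $|\psi|\le k_\psi$ and $\zeta(t)\in\mathcal{N}_{U_\delta(t)}(\bar u(t))$ are actually proved. The paper leaves the first implicit and attributes the second to ``upper semicontinuity of limiting normal cones'', which handles pointwise convergence but not weak convergence.
\end{itemize}

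\textbf{Your use of (H2).} (H2) is not among the stated hypotheses, but appealing to it is a needed repair rather than a flaw. Without convexity of $U_\delta(t)$, weak limits of normals can leave the cone. Take $U=\{u\in\mathbb{R}^2: u_2\ge -|u_1|\}$ and $\bar u=0$. Let $u^{\sind}(t)$ alternate rapidly between $\frac{1}{\sind}(1,-1)$ and $\frac{1}{\sind}(-1,-1)$, with $\zeta^{\sind}(t)$ correspondingly $(-1,-1)$ and $(1,-1)$. Then $\zeta^{\sind}\rightharpoonup(0,-1)\notin\mathcal{N}_U(0)$.

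\textbf{Two smaller points.}
\begin{itemize}
\item On $E_N$ the functions $g_j^-(t,x^{\sind}(t),u^{\sind}(t))$ are only bounded by $K_{bg}+\delta N$, not by a constant. Either add $K_{bg}$ to the definition of $E_N$, or split the complementarity term as the paper does.
\item You do not check that the limiting $\zeta$ lies in $L^1_m$, which membership in $\mathcal{M}_\delta(\bar x,\bar u,0)$ requires. Pointwise you only get a bound of the form $k_\psi+c\,(k_q+k_r)k_{bg}$, and a product of integrable functions need not be integrable. The paper's $\hat\zeta=(1+k_q+k_r)\tilde\zeta$ has exactly the same issue.
\end{itemize}
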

\begin{proof} 
    Let $\delta > 0$ and 
    \begin{align*}
        (\varphi, \psi, \gamma, \xi) \in \wl\limsup_{\genfrac{}{}{0pt}{}{x \to \bar{x}, u \to \bar{u}}{\sqc \to 0}}\mathcal{M}_{\delta}(x, u, \sqc).
    \end{align*}
    By definition, there exist $\{(x^{\sind}, u^{\sind}, \sqc^{\sind})\} \subset W_{n}^{1,1}\times L_{m}^{1}\times \mathscr{C}$ and $\{(\varphi^{\sind}, \psi^{\sind}, \gamma^{\sind}, \xi^{\sind})\} \subset L_{n}^{1}\times L_{m}^{1}\times \mathbb{R}^{n}\times \mathbb{R}^{n}$ such that $x^{\sind} \to \bar{x}$ uniformly, $u^\sind \to \bar{u}$ in $L_{m}^{1}$, $\sqc_{j}^{\sind}(t) \to 0$ almost everywhere in $[t_{0}, t_{1}]$, $j = 1, \dots, m_{g}$, $(\varphi^{\sind}, \psi^{\sind}) \rightharpoonup (\varphi, \psi)$ in $L_{n}^{1}\times L_{m}^{1}$, $(\gamma^{\sind}, \xi^{\sind}) \to (\gamma, \xi)$, and $(\varphi^{\sind}, \psi^{\sind}, \gamma^{\sind}, \xi^{\sind}) \in \mathcal{M}_{\delta}(x^{\sind}, u^{\sind}, \sqc^{\sind})$ for all $\sind \in \mathbb{N}$. According to Remark \ref{Obs_Rev_1}, there exist $k_{\psi} \in L_{m}^{1}$ and sequences $\{(q^{\sind}, r^{\sind})\} \subset L_{m_{b}}^{1}\times L_{m_{g}}^{1}$ and $\{\zeta^{\sind}\} \subset L_{m}^{1}$ obeying (\ref{cond1})--(\ref{cond5}).

Let us define, almost everywhere in $[t_{0}, t_{1}]$, $\sind \in \mathbb{N}$,
\begin{align} 
& \tilde{q}^{\sind}(t) := \dfrac{q^{\sind}(t)}{1 + k_{q}(t) + k_{r}(t)}, \quad \tilde{r}^{\sind}(t) := \dfrac{r^{\sind}(t)}{1 + k_{q}(t) + k_{r}(t)}, \label{DefMultiplicadores1} \\
& \tilde{\zeta}^{\sind}(t) := \dfrac{\zeta^{\sind}(t)}{1 + k_{q}(t) + k_{r}(t)}. \label{DefMultiplicadores2}
\end{align}
It follows from \eqref{assumption} that $\{\tilde{q}^{\sind}\}$ and $\{\tilde{r}^{\sind}\}$ are uniformly bounded. Thus, by extracting subsequences (we do not relabel), we have that $\tilde{q}^{\sind} \stackrel{\star}{\rightharpoonup} \tilde{q}$ and $\tilde{r}^{\sind} \stackrel{\star}{\rightharpoonup} \tilde{r}$ in $L_{m_{b}}^{\infty}$ and $L_{m_{g}}^{\infty}$, respectively. Moreover, from \eqref{cond1} and \eqref{cond3}, for each $\sind \in \mathbb{N}$ and almost everywhere in $[t_{0}, t_{1}]$,
\begin{multline}
\left(\dfrac{\varphi^{\sind}(t)}{1 + k_{q}(t) + k_{r}(t)}, \dfrac{\psi^{\sind}(t)}{1 + k_{q}(t) + k_{r}(t)}\right) \\ 
= \sum_{i=1}^{m_{b}}\tilde{q}_{i}^{\sind}(t)\nabla_{x, u}b_{i}(t, x^{\sind}(t), u^{\sind}(t)) \\ + \sum_{j=1}^{m_{g}}\tilde{r}_{j}^{\sind}(t)\nabla_{x, u}g_{j}(t, x^{\sind}(t), u^{\sind}(t)) + \left(0, \tilde{\zeta}^{\sind}(t)\right),
\label{Eqrev_1}
\end{multline}
and, for $j = 1, \dots, m_{g}$,
\begin{align}
    \tilde{r}_{j}^{\sind}(t)g_{j}^{-}(t, x^{\sind}(t), u^{\sind}(t)) = \sqc_{j}^{\sind}(t)\left(\dfrac{1}{1 + k_{q}(t) + k_{r}(t)}\right), ~ \tilde{r}_{j}^{\sind}(t) \leq 0, 
    \label{Eq_b_2}
\end{align}
with $\sqc_{j}^{\sind}(t) \to 0$ almost everywhere in $[t_{0}, t_{1}]$ for $j = 1, \dots, m_{g}$. 

Using \eqref{cond5}, \eqref{assumption}, \eqref{DefMultiplicadores1}, \eqref{DefMultiplicadores2}, \eqref{Eqrev_1} and (H10), we see that
\begin{align*}
    |\tilde{\zeta}^{\sind}(t)| \leq \dfrac{k_{\psi}(t)}{1 + k_{q}(t) + k_{r}(t)} + 2\tilde{c}_{bg}(t) ~ \text{a.e.~in} ~ [t_0,t_1] ~ \forall \, \sind \in \mathbb{N}.
\end{align*}
By Dunford-Pettis Theorem \cite[Theorem 2.5.1]{Vinter2000}, there exists a subsequence $\{\tilde{\zeta}^{\sind}\}$ (again, we do not relabel) such that $\tilde{\zeta}^{\sind} \rightharpoonup \tilde{\zeta}$ in $L_{m}^{1}$ and
\begin{align*}
    \int_{t_{0}}^{t}\tilde{\zeta}^{\sind}(s)\dif s \to \int_{t_{0}}^{t}\tilde{\zeta}(s)\dif s \quad\text{ uniformly}.
\end{align*} 

From \eqref{Eq_b_2}, for any Lebesgue measurable set $I \subset [t_{0}, t_{1}]$, for $j = 1, \dots, m_{g}$, we have 
\begin{align}
    \nonumber
    \int_{I}\left(\dfrac{\sqc_{j}^{\sind}(t)}{1 + k_{q}(t) + k_{r}(t)}\right)\dif t & 
    %
     = \int_{I}\tilde{r}_{j}^{\sind}(t)[g_{j}^{-}(t, x^{\sind}(t), u^{\sind}(t)) - g_{j}^{-}(t, \bar{x}(t), \bar{u}(t))]\dif t  \nonumber \\
    & \quad + \int_{I}\tilde{r}_{j}^{\sind}(t)g_{j}^{-}(t, \bar{x}(t), \bar{u}(t)) \dif t
    \label{Eq_Revis_1} 
\end{align}
and
\begin{align}
    \int_{I}\tilde{r}_{j}^{\sind}(t)\dif t \leq 0 ~ \forall \, \sind \in \mathbb{N}.
    \label{Eq_Revis_2}
\end{align}
From \eqref{assumption}, \eqref{DefMultiplicadores1}, \eqref{Eq_b_2}, (H5) and (H6), we also have that there exists an integrable function $\hat{c}_{bg}$ such that, for $j = 1, \dots, m_{g}$ and almost everywhere in $[t_0,t_1]$,
\begin{align*}
\left\vert \sqc_{j}^{\sind}(t)\left(\dfrac{1}{1 + k_{q}(t) + k_{r}(t)}\right) \right\vert = \left\vert \tilde{r}_{j}^{\sind}(t) \right\vert \left\vert g_{j}^{-}(t, x^{\sind}(t), u^{\sind}(t)) \right\vert \leq \hat{c}_{bg}(t),
\end{align*}
and
\begin{align*}
\left\vert \tilde{r}_{j}^{\sind}(t)[g_{j}^{-}(t, x^{\sind}(t), u^{\sind}(t)) - g_{j}^{-}(t, \bar{x}(t), \bar{u}(t))] \right\vert \leq \delta k_{bg}(t).
\end{align*}
Additionally, we have that $x^{\sind} \to \bar{x}$ uniformly and $u^{\sind}(t) \to \bar{u}(t)$ almost everywhere in $[t_{0}, t_{1}]$ after a subsequence extraction (we do not relabel, as always), as a consequence of $u^\sind \to \bar{u}$ in $L_{m}^{1}$. We also have that $\sqc_{j}^{\sind}(t) \to 0$ almost everywhere in $[t_{0}, t_{1}]$, $j = 1, \dots, m_{g}$, and $\tilde{r}^{\sind} \stackrel{\star}{\rightharpoonup} \tilde{r}$ in $L_{m_{g}}^{\infty}$. Thus, taking the limit as $\sind \to \infty$ in \eqref{Eq_Revis_1} and \eqref{Eq_Revis_2}, by applying the Dominated Convergence Theorem (see \cite[Theorem 3.25]{Gordon1994}, for example) in the integral in the left hand side of \eqref{Eq_Revis_1} and in the first integral in the right hand side of \eqref{Eq_Revis_1}, using the fact that $g(t, \cdot, \cdot)$ is continuous almost everywhere in $[t_{0}, t_{1}]$ and the weak$^\star$ convergence of $\{\tilde{r}^{\sind}$\}, we obtain
\begin{align} \label{Eq_b_5}
\tilde{r}_{j}(t)g_{j}^{-}(t, \bar{x}(t), \bar{u}(t)) = 0 ~ \text{and} ~ \tilde{r}_{j}(t) \leq 0 ~ \text{a.e~in} ~ [t_{0}, t_{1}], ~ j = 1, \dots, m_{g}.
\end{align}

Similarly, note that for any Lebesgue measurable set $\hat{I} \subset [t_{0}, t_{1}]$, using \eqref{Eqrev_1}, we obtain
\begin{align*}
	&\int_{\hat{I}}\varphi^{\sind}(t)\left(\dfrac{1}{1 + k_{q}(t) + k_{r}(t)}\right)\dif t \\
    &= \int_{\hat{I}}\sum_{i=1}^{m_{b}}\tilde{q}_{i}^{\sind}(t)\left[\nabla_{x}b_{i}(t, x^{\sind}(t), u^{\sind}(t)) - \nabla_{x}b_{i}(t, \bar{x}(t), \bar{u}(t))\right]\dif t\\
	&\quad + \int_{\hat{I}}\sum_{j=1}^{m_{g}}\tilde{r}_{j}^{\sind}(t)\left[\nabla_{x}g_{j}(t, x^{\sind}(t), u^{\sind}(t)) - \nabla_{x}g_{j}(t, \bar{x}(t), \bar{u}(t))\right]\dif t\\
	&\quad + \int_{\hat{I}}\sum_{i=1}^{m_{b}}\tilde{q}_{i}^{\sind}(t) \nabla_{x}b_{i}(t, \bar{x}(t), \bar{u}(t))\dif t + \int_{\hat{I}}\sum_{j=1}^{m_{g}}\tilde{r}_{j}^{\sind}(t) \nabla_{x}g_{j}(t, \bar{x}(t), \bar{u}(t))\dif t
\end{align*}
and
\begin{align*}
	&\int_{\hat{I}}\left(\dfrac{\psi^{\sind}(t)}{1 + k_{q}(t) + k_{r}(t)}\right)\dif t \\
    &= \int_{\hat{I}}\sum_{i=1}^{m_{b}}\tilde{q}_{i}^{\sind}(t)\left[\nabla_{u}b_{i}(t, x^{\sind}(t), u^{\sind}(t)) - \nabla_{u}b_{i}(t, \bar{x}(t), \bar{u}(t))\right]\dif t\\
	&\quad + \int_{\hat{I}}\sum_{j=1}^{m_{g}}\tilde{r}_{j}^{\sind}(t)\left[\nabla_{u}g_{j}(t, x^{\sind}(t), u^{\sind}(t)) - \nabla_{u}g_{j}(t, \bar{x}(t), \bar{u}(t))\right]\dif t\\
	&\quad + \int_{\hat{I}}\sum_{i=1}^{m_{b}}\tilde{q}_{i}^{\sind}(t) \nabla_{u}b_{i}(t, \bar{x}(t), \bar{u}(t))\dif t  + \int_{\hat{I}}\sum_{j=1}^{m_{g}}\tilde{r}_{j}^{\sind}(t) \nabla_{u}g_{j}(t, \bar{x}(t), \bar{u}(t))\dif t \\ 
	& \quad - \int_{\hat{I}}\tilde{\zeta}^{\sind}(t)\dif t
\end{align*}
for each $\sind \in \mathbb{N}$. Thus, applying the limit as $\sind \to \infty$, using the uniform convergence of $\{x^{\sind}\}$, the pointwise convergence almost everywhere in $[t_{0}, t_{1}]$ of $\{u^{\sind}\}$, the weak convergence of $\{(\zeta^{\sind}, \varphi^{\sind}, \psi^{\sind})\}$ in $L_{m}^{1}\times L_{n}^{1}\times L_{m}^{1}$ and the weak$^\star$ convergence of $\{(\tilde{q}^{\sind}, \tilde{r}^{\sind})\}$ in $L_{n}^{\infty}\times L_{m}^{\infty}$, along with the continuity of $\nabla_{x,u}(b, g)(t, \cdot, \cdot)$ almost everywhere in $[t_{0}, t_{1}]$ (see Hypothesis (H5)), the uniform boundedness of $\{\tilde{q}^{\sind}\}$ and $\{\tilde{r}^{\sind}\}$, the fact that the gradients vectors of $b_i(t,\cdot,\cdot)$, $i=1,\ldots,m_b$, and $g_j(t\cdot,\cdot)$, $j=1,\ldots,m_g$, are locally uniformly integrably bounded almost everywhere in $[t_{0}, t_{1}]$ (see Hypothesis (H5)), and the Dominated Convergence Theorem \cite[Theorem 3.25, p. 45]{Gordon1994}, it follows that
\begin{align}
	\nonumber
	& \left(\dfrac{\varphi(t)}{1 + k_{q}(t) + k_{r}(t)}, \dfrac{\psi(t)}{1 + k_{q}(t) + k_{r}(t)}\right) \\ 
	& \qquad = \sum_{i=1}^{m_{b}}\tilde{q}_{i}(t)\nabla_{x, u}b_{i}(t, \bar{x}(t), \bar{u}(t)) + \sum_{j=1}^{m_{g}}\tilde{r}_{j}(t)\nabla_{x, u}g_{j}(t, \bar{x}(t), \bar{u}(t)) \nonumber \\
	& \qquad\quad~ + \left(0, -\tilde{\zeta}(t)\right)
\label{Eq_b_6}
\end{align}
almost everywhere in $[t_0,t_1]$.

Defining
\begin{align*}
& q(t) := (1 + k_{q}(t) + k_{r}(t)) \tilde{q}(t) ~ \text{a.e.~in} ~ [t_{0}, t_{1}], \\ 
& r(t) := (1 + k_{q}(t) + k_{r}(t)) \tilde{r}(t) ~ \text{a.e.~in} ~ [t_{0}, t_{1}], \\
& \hat{\zeta}(t) := (1 + k_{q}(t) + k_{r}(t)) \tilde{\zeta}(t) ~ \text{a.e.~in} ~ [t_{0}, t_{1}],
\end{align*}
from \eqref{Eq_b_5} and \eqref{Eq_b_6}, we have that 
$$
r_{j}(t)g_{j}^{-}(t, \bar{x}(t), \bar{u}(t)) = 0, ~ r_{j}(t) \leq 0 ~ \text{a.e.~in} ~ [t_{0}, t_{1}], ~ j=1,\ldots,m_g,
$$
and
\begin{align*}
	(\varphi(t), \psi(t)) = { } & \sum_{i=1}^{m_{b}}q_{i}(t)\nabla_{x, u}b_{i}(t, \bar{x}(t), \bar{u}(t)) \\ & + \sum_{j=1}^{m_{g}}r_{j}(t)\nabla_{x, u}g_{j}(t, \bar{x}(t), \bar{u}(t)) + (0, - \hat{\zeta}(t)) 
\end{align*}
almost everywhere in $[t_0,t_1]$.

Finally, invoking the properties of upper semicontinuity of the limiting normal cones, we conclude that $\hat{\zeta}(t) \in \mathcal{N}_{U_{\delta}(t)}(\bar{u}(t))$ almost everywhere in $[t_{0}, t_{1}]$ and $(\gamma, \xi) \in \mathcal{N}_{C}(\bar{x}(t_0), \bar{x}(t_1))$. Therefore,
\begin{align*}
	(\varphi, \psi, \gamma, \xi) \in \mathcal{M}_{\delta}(\bar{x}, \bar{u}, 0)
\end{align*}
and, consequently, $(\bar{x}, \bar{u})$ is an AWMP-regular process.
\end{proof}

The Calibrated Constraint Qualification (CCQ) was introduced by Clarke and de Pinho in \cite{Clarke2010} for problems more general than (P), including, for example, non-smooth mixed constraints. It has been demonstrated that the weak maximum principle holds under CCQ. It is important to note that in \cite{Clarke2010}, a more general notion of local minimizer was employed than the one utilized in this study. This more general notion incorporates a radius function, $R$, which is required to be measurable. We hereby introduce an asymptotic version of CCQ, which will also provide a sufficient condition for AWMP-regularity.

\begin{definition} \label{ACCQ}
The \emph{Asymptotic Calibrated Constraint Qualification} (ACCQ) is satisfied at a feasible process $(\bar{x},\bar{u})$ if there exist $\delta > 0$ and an integrable function $M: [t_{0}, t_{1}] \to \mathbb{R}$, such that, given sequences $\{(x^{\sind}, u^{\sind})\}\subset W_{n}^{1,1} \times L_{m}^{1}$, $\{(q^{\sind}, r^{\sind})\} \subset L_{m_{b}}^{1} \times L_{m_{g}}^{1}$ and $\{\zeta^{\sind}\} \subset L_{m}^{1}$ with
\begin{align*}
& x^{\sind} \rightarrow \bar{x} ~ \text{uniformly}, \\
& u^{\sind} \to \bar{u} ~ \text{in} ~ L_{m}^{1}, \\
& \zeta^{\sind}(t) \in \mathcal{N}_{U_{\delta}(t)}(u^{\sind}(t)) ~ \text{a.e.~in} ~ [t_0,t_1], \\ 
& r_{j}^{\sind}(t)g_{j}^{-}(t, x^{\sind}(t), u^{\sind}(t)) \to 0, ~ r_{j}^{\sind}(t) \leq 0 ~ \text{a.e.~in} ~ [t_0,t_1], ~ j=1,\ldots,m_g,
\end{align*}
for $\{(\varphi^{\sind}, \psi^{\sind})\} \subset L_{n}^{1}\times L_{m}^{1}$ satisfying, almost everywhere in $[t_0,t_1]$,
\begin{align*}
(\varphi^{\sind}(t), \psi^{\sind}(t)) &= \sum_{i=1}^{m_{b}} q_{i}^{\sind}(t) \nabla_{x, u} b_{i}(t, x^{\sind}(t), u^{\sind}(t)) + \sum_{j=1}^{m_{g}} r_{j}^{\sind}(t) \nabla_{x, u} g_{j}(t, x^{\sind}(t), u^{\sind}(t)) \\
&\quad + (0, -\zeta^{\sind}(t)),
\end{align*}
it holds that
\begin{align*}
|(q^{\sind}(t), r^{\sind}(t))| \leq M(t)|\psi^{\sind}(t)| ~ \text{a.e.~in} ~ [t_0,t_1] ~ \forall \sind \in \mathbb{N}.
\end{align*}
\end{definition}

It is evident that if ACCQ is valid, then CCQ is also valid. In the sequel, the validity of ACCQ at a reference process is demonstrated to ensure AWMP-regularity.
\begin{theorem} 
\label{Teo_ACCQ_AWMP_reg}
    Assume that ACCQ, (H3)--(H6) and (H10) are satisfied at a feasible process $(\bar{x},\bar{u})$ with the same $\delta > 0$. Then $(\bar{x}, \bar{u})$ is an AWMP-regular process.
    \end{theorem}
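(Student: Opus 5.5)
The plan is to reduce the statement to Theorem \ref{Lema_Cond_Sufi_Reg_AWMP}. That theorem already gives AWMP-regularity once the multiplier sequences from Remark \ref{Obs_Rev_1} are dominated by integrable functions, as in \eqref{assumption}. So the only real task is to produce such functions $k_q, k_r$ from ACCQ and the bound \eqref{cond5} built into the definition of $\mathcal{M}_{\delta}$.

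Fix the $\delta>0$ provided by ACCQ, which by hypothesis is also the one for which (H3)--(H6) and (H10) hold. Take an element
\[
(\varphi,\psi,\gamma,\xi) \in \wl\limsup_{\genfrac{}{}{0pt}{}{x \to \bar{x}, u \to \bar{u}}{\sqc \to 0}}\mathcal{M}_{\delta}(x, u, \sqc).
\]
Then choose sequences $(x^{\sind},u^{\sind},\sqc^{\sind})$ and $(\varphi^{\sind},\psi^{\sind},\gamma^{\sind},\xi^{\sind})\in\mathcal{M}_\delta(x^{\sind},u^{\sind},\sqc^{\sind})$ as in the definition of the weak upper limit. Remark \ref{Obs_Rev_1} then yields $k_\psi\in L^1_m$ and multipliers $(q^{\sind},r^{\sind},\zeta^{\sind})$ satisfying \eqref{cond1}--\eqref{cond5}.

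Next, check that these sequences meet the premises of Definition \ref{ACCQ}.
\begin{itemize}
\item The convergences $x^{\sind}\to\bar{x}$ uniformly and $u^{\sind}\to\bar{u}$ in $L^1_m$ hold by construction.
\item The inclusion $\zeta^{\sind}(t)\in\mathcal{N}_{U_\delta(t)}(u^{\sind}(t))$ is \eqref{cond2}.
\item The representation of $(\varphi^{\sind},\psi^{\sind})$ is exactly \eqref{cond1}.
\item By \eqref{cond3}, $r_j^{\sind}(t)g_j^-(t,x^{\sind}(t),u^{\sind}(t))=\sqc_j^{\sind}(t)\to0$ a.e., with $r_j^{\sind}\le 0$.
\end{itemize}
ACCQ therefore applies and gives
\[
|(q^{\sind}(t),r^{\sind}(t))|\le M(t)|\psi^{\sind}(t)|\le M(t)k_\psi(t) \quad\text{a.e.,}
\]
uniformly in $\sind$.

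The main obstacle is that $Mk_\psi$ is a product of two $L^1$ functions and need not be integrable. Theorem \ref{Lema_Cond_Sufi_Reg_AWMP} asks for integrable $k_q, k_r$. I would first re-examine how that proof uses integrability. It only needs the normalized multipliers $\tilde q^{\sind}=q^{\sind}/(1+k_q+k_r)$ and $\tilde r^{\sind}$ to be uniformly bounded, which holds for any a.e. finite measurable $k_q,k_r$. It also needs $\tilde\zeta^{\sind}$ to be integrably bounded, which follows from \eqref{cond5} and (H10) irrespective of integrability of $k_q,k_r$. The rescaled limits $q=(1+k_q+k_r)\tilde q$ and $r=(1+k_q+k_r)\tilde r$ must lie in $L^1$. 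For this, the pointwise bound $|\tilde q^{\sind}|\le k_q/(1+k_q+k_r)$ passes to weak$^\star$ limits, giving $|q|,|r|\le Mk_\psi$ a.e. This is finite a.e. but, as noted, not obviously integrable. Since $\mathcal{M}_\delta$ only requires $(q,r)\in L^1_{m_b}\times L^1_{m_g}$, I would check integrability through the identity $\zeta = \hat\zeta$ together with $|\psi|\le k_\psi$.

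If that check fails, I would instead assume (or verify from (H6)) that $k_\psi$ can be taken bounded, or that $M\in L^\infty$, so that $Mk_\psi\in L^1$. I would then set $k_q=k_r:=Mk_\psi$ and invoke Theorem \ref{Lema_Cond_Sufi_Reg_AWMP} directly, concluding $(\varphi,\psi,\gamma,\xi)\in\mathcal{M}_\delta(\bar x,\bar u,0)$ and hence AWMP-regularity.
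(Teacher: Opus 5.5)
Your reduction is the same as the paper's proof. You check the premises of ACCQ from \eqref{cond1}--\eqref{cond3}, combine $|(q^k,r^k)|\le M|\psi^k|$ with \eqref{cond5} to get $|(q^k,r^k)|\le Mk_\psi$, and invoke Theorem \ref{Lema_Cond_Sufi_Reg_AWMP}. The paper stops there and simply asserts that \eqref{assumption} holds. Your objection to that step is correct: $M$ and $k_\psi$ are only integrable, so $Mk_\psi$ need not be (e.g.\ $M=k_\psi=t^{-1/2}$ on $[0,1]$). The paper gives no argument for this step.

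However, your proposal does not close the gap either. Your audit of the proof of Theorem \ref{Lema_Cond_Sufi_Reg_AWMP} is accurate: the uniform bounds on $\tilde q^k,\tilde r^k$, the integrable bound on $\tilde\zeta^k$, and the limit passages need only $k_q,k_r$ measurable and a.e.\ finite. But integrability is genuinely needed at the end. Membership in $\mathcal{M}_\delta(\bar x,\bar u,0)$ requires $q$, $r$ \emph{and} $\hat\zeta$ to lie in $L^1$, and all you know is $|(q,r)|\le Mk_\psi$.

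The proposed check through ``$\zeta=\hat\zeta$'' does not work, for three reasons.
\begin{enumerate}
\item The $\zeta^k$ need not converge (Remark \ref{Obs_Rev_1}), so there is no limit $\zeta$ to compare $\hat\zeta$ with.
\item The limit relation only gives $\hat\zeta=\sum_i q_i\nabla_u b_i+\sum_j r_j\nabla_u g_j-\psi$ at $(t,\bar x(t),\bar u(t))$. Hence $|\psi|\le k_\psi$ controls $\hat\zeta$ only up to the $(q,r)$-terms, whose integrability is exactly what is in question; the argument is circular.
\item Applying ACCQ to (truncations of) the limit multipliers just returns $|(q,r)|\le M|\psi|$ again.
\end{enumerate}

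Your fallback changes the hypotheses rather than proving the statement. The function $k_\psi$ comes with the element of the weak upper limit. (H6) concerns only $K_f$, $K_{bg}$ and $\bar u$, so it cannot make $k_\psi$ bounded; in Lemma \ref{Lema_Reesc_AWMP} it is $k_f+c_{\sqb}$, which is merely integrable. So what you establish, and implicitly what the paper's argument establishes, is the theorem under a strengthened ACCQ, e.g.\ $M\in L^\infty$ or $Mk_\psi$ integrable. Under the stated hypotheses, the passage to \eqref{assumption} remains unjustified.
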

    \begin{proof}
    Let us consider a $\delta > 0$ for which Definition \ref{ACCQ} and all other assumptions hold. Take
    \begin{align*}
        (\varphi, \psi, \gamma, \xi) \in \wl\limsup_{\substack{x \to \bar{x}, u \to  \bar{u} \\ \sqc \to 0}}\mathcal{M}_{\delta}(x, u, \sqc).
    \end{align*}
    Thus, by definition, there exist sequences $\{(x^{\sind}, u^{\sind}, \sqc^{\sind})\} \subset W_{n}^{1,1} \times L_{m}^{1} \times \mathscr{C}$ and $\{(\varphi^{\sind}, \psi^{\sind}, \gamma^{\sind}, \xi^{\sind})\} \subset L_{n}^{1} \times L_{m}^{1} \times \mathbb{R}^{n} \times \mathbb{R}^{n}$ such that $x^{\sind} \to \bar{x}$ uniformly, $u^{\sind} \to \bar{u}$ in $L_{m}^{1}$, $\sqc_{j}^{\sind}(t) \to 0$ almost everywhere in $[t_{0}, t_{1}]$, $j = 1, \dots, m_{g}$, $(\varphi^{\sind}, \psi^{\sind}) \rightharpoonup (\varphi, \psi)$, $(\gamma^{\sind}, \xi^{\sind}) \to (\gamma, \xi)$, and
    \begin{align*}
        (\varphi^{\sind}, \psi^{\sind}, \gamma^{\sind}, \xi^{\sind}) \in \mathcal{M}_{\delta}(x^{\sind}, u^{\sind}, \sqc^{\sind}) ~ \forall \, \sind \in \mathbb{N}.
    \end{align*}
    From the inclusion above, we know that there exist $k_{\psi} \in L_{m}^{1}$ and sequences $\{(q^{\sind}, r^{\sind})\}$ $\subset L_{m_{b}}^{1} \times L_{m_{g}}^{1}$ and $\{\zeta^{\sind}\} \subset L_{m}^{1}$ such that (\ref{cond1})--(\ref{cond5}) are valid. From (\ref{cond1})--(\ref{cond3}), since ACCQ holds, we know that there exists $M: [t_{0}, t_{1}] \to \mathbb{R}$ integrable such that 
        \begin{align*}
            |(q^{\sind}(t), r^{\sind}(t))| \leq  M(t)|\psi^{\sind}(t)| ~ \text{a.e~in} ~ [t_0,t_1] ~ \forall \, \sind \in \mathbb{N}.
        \end{align*}
From (\ref{cond5}), we get
$$
|(q^{\sind}(t), r^{\sind}(t))| \leq M(t)k_{\psi}(t) ~ \text{a.e~in} ~ [t_0,t_1] ~ \forall \, \sind \in \mathbb{N}.
$$        
We see that \eqref{assumption} holds. It follows from Theorem \ref{Lema_Cond_Sufi_Reg_AWMP} that $(\bar{x}, \bar{u})$ is AWMP-regular, as we aimed to prove.
\end{proof}

\section{Conclusions} \label{Sec_Concl}

In summary, the present work contributes to the theory of sequential optimality conditions in optimal control by introducing the AWMP-regularity condition. AWMP-regularity, understood as the minimal requirement under which the asymptotic weak maximum principle imply the classical weak maximum principle, provides a theoretical foundation that strengthens the connection between sequential optimality conditions and classical optimality conditions in optimal control. This constraint qualification finds inspiration from analogous advances in nonlinear programming, namely AKKT-regularity, Cone Continuity Property (CCP), and AM-regularity conditions. However, it is formulated to align with the unique structure of optimal control problems.

The AWMP-regularity condition provides a systematic method for validating optimal candidate solutions under AWMP conditions using the rigor of classical conditions. This expands the applicability and interpretive power of sequential methods in optimal control. The AWMP-regularity condition offers a theoretical guarantee of the weak maximum principle's validity and enhances the robustness of solution methods. This includes practical algorithms like the method of multipliers presented in \cite{Moreira2024art}, which naturally generate limit points satisfying AWMP conditions.

An asymptotic version of the CCQ for optimal control problems with smooth mixed constraints is also presented, based on the original work in \cite{Clarke2010}. This reformulation gives rise to another constraint qualification, called ACCQ. Furthermore, it is demonstrated that if ACCQ holds, then AWMP-regularity is satisfied, as formalized in Theorem \ref{Teo_ACCQ_AWMP_reg}.

In future research, the aim is to examine the relationships between additional constraint qualifications and AWMP regularity. These additional constraint qualifications may include full rank type CQs \cite{Hestenes1966, Neustadt1976, Osmolovskii1975, dePinho2002, dePinho2003}, constant rank type CQs \cite{Andreani2020, Pereira2020}, and the CCQ itself, among others found in the literature, as mentioned in the introduction.

\section*{Acknowledgments}

This study was financed, in part, by the São Paulo Research Foundation (FAPESP) and the National Council for Scientific and Technological Development (CNPq), Brazil. Process Numbers 2022/16005-0 and 305245/2024-4. This work was also supported by the National Council for the Improvement of Higher Education (CAPES/UNESP/IBILCE). Finance Code 001.

\bibliographystyle{plain}
\bibliography{references}
\end{document}